\newlist{prooflist}{description}{1}
\setlist[prooflist]{font=\normalfont \itshape, labelindent = \parindent, leftmargin = 0pt}
\renewcommand{\lg}{\mathfrak g} %I got ``already defined here, but I don't know from where, so I redefined. Might be dangerous.
\newcommand{\lgbar}{\overline\lg}
\newcommand{\lf}{\mathfrak f}
\newcommand{\lh}{\mathfrak h}
\newcommand{\frn}{\mathfrak{n}}
\DeclareMathOperator{\Kur}{Kur}
\DeclareMathOperator{\obs}{obs}
\DeclareMathOperator{\Gl}{GL}
\DeclareMathOperator{\ad}{ad}
\DeclareMathOperator{\KS}{KS}
\title{Verbal ideals and unobstructed complex parallelisable nilmanifolds}
\author{Matthias Paulsen}
\address{Matthias Paulsen\\FB 12/Mathematik und Informatik\\
Philipps-Universit\"at Marburg\\
Hans-Meerwein-Str. 6\\
35032 Marburg\\
Germany}
\email{paulsen@mathematik.uni-marburg.de}
\author{S\"onke Rollenske}
\address{S\"onke Rollenske\\FB 12/Mathematik und Informatik\\
Philipps-Universit\"at Marburg\\
Hans-Meerwein-Str. 6\\
35032 Marburg\\
Germany}
\email{rollenske@mathematik.uni-marburg.de}
\author{Konstantin Wehler}
\address{Konstantin Wehler\\FB 12/Mathematik und Informatik\\
Philipps-Universit\"at Marburg\\
Hans-Meerwein-Str. 6\\
35032 Marburg\\
Germany}
\email{konstantin.wehler@uni-marburg.de}
\begin{document}
\begin{abstract}
We show that a compact complex parallelisable nilmanifold has unobstructed deformations if and only if its associated Lie algebra satisfies a reality condition and is a free Lie algebra in a variety of Lie algebras, that is, defined by a verbal ideal in a free Lie algebra.

We provide a  partial classification of verbal ideals and show that there are finitely many such Lie algebras up to dimension~$19$, whereas infinite families start to appear in dimension~$20$.

As a consequence, there are finitely many complex homotopy types of unobstructed complex parallelisable nilmanifolds up to dimension~$19$, and infinitely many in dimension~$20$.
\end{abstract}
\subjclass[2020]{32G05; (17B01, 17B30, 32M10)}
 \keywords{Kuranishi space, complex parallelisable nilmanifold, free Lie algebra, verbal ideal}

\maketitle

\section{Introduction}
The study of deformations of complex structures on a compact complex manifold $X$ has been an important tool ever since it was first developed by Kodaira and Spencer in \cite{Kod-sp58}. While Kuranishi proved in \cite{kuranishi62} the existence of a complex space $\Kur(X)$, nowadays known as the Kuranishi space, parametrising all sufficiently small deformations of $X$ in the most effective way, the actual computation of deformations is usually intractable without further assumptions.

If the canonical bundle of $X$ is trivial and $X$ admits a K\"ahler metric, then deformations are unobstructed, that is, $\Kur(X)$ is smooth by the famous Tian--Todorov Lemma, but in general the Kuranishi space can be arbitrarily singular \cite{Vakil}.

In \cite{rollenske2011kuranishi} the second author showed that on compact complex parallelisable nilmanifolds, the computation of the Kuranishi space can be performed in finitely many steps from the structure equations of the associated nilpotent complex Lie algebra. One deduces that in many cases the deformation space is singular, showing that there does not exist a Tian--Todorov type result in this context.
But the converse question, which complex parallelisable nilmanifolds have unobstructed deformations,  was only addressed up to second order in \cite{rollenske2011kuranishi} hinting at some kind of freeness property of the Lie algebra.

In this paper, we present a precise characterisation of complex parallelisable nilmanifolds with unobstructed deformations.
Our results draw upon the theory of identities in Lie algebras \cite{bahturin2021identical}:
let $R$ be a set of non-associative polynomials.
The class of all Lie algebras satisfying the relations in $R$ form a so-called variety  of Lie algebras $\IV$.
If a Lie algebra $\lg\in\IV$ satisfies no other relations than the ones required by $\IV$,
then $\lg$ is called a free Lie algebra of $\IV$.
In order to suppress the specific variety $\IV$ from the notation, we call such a Lie algebra \emph{pseudo-free}; in particular, freely nilpotent Lie algebras are pseudo-free in this sense.

We can now state our first main result, to be proved in Section~\ref{sect: geometry}.
\begin{thm}\label{thm: main deformation}
Let $X$ be a compact complex parallelisable nilmanifold with associated Lie algebra $\lg$. Then $X$ has unobstructed deformations if and only if $\lg$ is pseudo-free and $\lg \cong \overline\lg$ as complex Lie algebras.
\end{thm}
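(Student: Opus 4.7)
My plan is to translate the analytic question of unobstructedness into a purely algebraic one using the finite-dimensional DGLA description of \cite{rollenske2011kuranishi}. Writing $X = G/\Gamma$ with $G$ a simply connected complex nilpotent Lie group, right-invariant holomorphic vector fields trivialise $T^{1,0} X$, and left-invariance identifies the Dolbeault complex computing deformations with the finite-dimensional DGLA
\[ L^\bullet \;=\; \Lambda^\bullet \lgbar^{\,*} \otimes \lg, \]
whose differential is dual to the bracket of $\lgbar$ and whose graded bracket comes from $\lg$. By the cited result, $\Kur(X)$ is cut out from $L^1$ by the Maurer--Cartan equation $d\eta + \tfrac{1}{2}[\eta,\eta] = 0$ and is computable in a finite number of steps of the usual iterative lifting procedure; unobstructedness thus amounts to the statement that every Maurer--Cartan solution modulo $\mathfrak{m}^2$ extends to all orders.

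To obtain the reality condition $\lg \cong \lgbar$ as a necessary condition, I would examine the subspace $\mathrm{Hom}_{\mathrm{Lie}}(\lgbar,\lg) \subset L^1$: for a class $\phi$ here, the first non-trivial obstructions along the Kuranishi iteration measure the defect of $\phi$ from being a Lie algebra isomorphism, and a careful expansion along the central series of $\lg$ should force $\phi$ to be invertible whenever unobstructedness holds, producing a complex Lie algebra isomorphism $\lgbar \to \lg$. In the opposite direction, one can construct a concrete obstructed class from any mismatch between the structure constants of $\lg$ and of $\lgbar$. Once such an isomorphism is fixed, $L^\bullet$ becomes intrinsic to $\lg$, which is what allows the subsequent analysis to be carried out in purely Lie-theoretic terms.

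With the reality condition in place, I expect the pseudo-freeness condition to emerge as follows. The Kuranishi iteration expresses higher-order obstructions as universal Lie polynomials evaluated on the first-order data, and because the whole construction is functorial in $\lg$, every such polynomial must vanish in $\lg$ under arbitrary substitution of elements. Hence the identities forced by unobstructedness are closed under Lie-endomorphisms of $\lg$, i.e.\ they generate a verbal ideal $V$ in the absolutely free Lie algebra $F$ on $\dim H^1(\lg)$ generators, and $\lg$ must coincide with $F/V(F)$, the free object in the variety cut out by $V$. Conversely, if $\lg$ is pseudo-free in this sense, then each Kuranishi obstruction is itself verbal in the variety of $\lg$ and therefore vanishes identically, making $\Kur(X)$ smooth.

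The main obstacle is the equivalence in the last paragraph: showing that the ideal generated by the higher-order Kuranishi obstructions is \emph{precisely} the verbal ideal defining the variety of $\lg$, neither strictly larger nor strictly smaller. The freely $c$-step nilpotent situation already treated in \cite{rollenske2011kuranishi} provides both the pattern and the basic building blocks; the generalisation should proceed by systematically replacing the free nilpotent Lie algebra by the free object of the variety at hand, and by verifying that the Kuranishi recursion, combined with the closure properties of verbal ideals under substitution, matches these two ideals at every order.
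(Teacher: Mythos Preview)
Your proposal misses the key simplification that makes the paper's proof clean and complete. In the complex parallelisable case one does not need any Kuranishi recursion at all: for $\Phi\in\lgbar^*\otimes\lg$, a direct evaluation of $\delbar\Phi$ and of the Schouten bracket on $\overline x,\overline y\in\lgbar$ gives
\[
\delbar\Phi(\overline x,\overline y)=-\Phi([\overline x,\overline y]),\qquad
\tfrac12[\Phi,\Phi](\overline x,\overline y)=[\Phi(\overline x),\Phi(\overline y)],
\]
so the Maurer--Cartan equation $\delbar\Phi+\tfrac12[\Phi,\Phi]=0$ is \emph{exactly} the statement that $\Phi\colon\lgbar\to\lg$ is a Lie algebra homomorphism. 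Consequently, unobstructedness is literally the condition that every linear map $\overline V\to\lg$ (with $V\cong\lg/[\lg,\lg]$) extends to a Lie homomorphism $\lgbar\to\lg$. From there, the equivalence with ``$\lg\cong\lgbar$ and $\lg$ pseudo-free'' is a short formal argument using the characterisation of pseudo-free Lie algebras via extendability of maps on generators, plus the observation that the specific map $\overline{x_i}\mapsto x_i$ yields an isomorphism because the analogous extension on the other side furnishes an inverse.

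By contrast, your plan stays inside the iterative formalism and tries to identify the ideal generated by all higher-order obstructions with the verbal ideal of $\lg$; you yourself flag this as the main obstacle, and indeed it is not clear how to carry it out, since the recursion involves a Green operator and harmonic projection that are not obviously functorial in $\lg$. Your argument for the reality condition is also off: an element of $\mathrm{Hom}_{\mathrm{Lie}}(\lgbar,\lg)$ already satisfies Maurer--Cartan exactly, so there are no further obstructions ``measuring the defect from being an isomorphism''; invertibility is obtained in the paper not from obstruction theory but by producing an explicit inverse. Once you see the Maurer--Cartan/homomorphism identification, all of the recursive machinery becomes unnecessary.
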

In terms of an appropriately chosen basis, the condition $\lg \cong \overline\lg$ can be interpreted as a reality condition on the structure constants of $\lg$.

It is easy to see (cf.\ Lemma~\ref{lem: pseudo-free}) that a pseudo-free Lie algebra is precisely the quotient of a free Lie algebra by a so-called \emph{verbal ideal}, that is, an ideal invariant under all endomorphisms of the free Lie algebra. Therefore, Theorem~\ref{thm: main deformation} connects the classification of complex parallelisable nilmanifolds with unobstructed deformations to the classification of verbal ideals in free Lie algebras. 

In Section~\ref{sect: verbal}, we explain how standard tools from representation theory can be used to determine the possible homogeneous parts of a verbal ideal. Combined with results of Zhuravlev \cite{zhuravlev1997}, we obtain a complete classification for low nilpotency index or small dimension: let us denote by $\gothn_{m,\nu}$ the free $\nu$-step nilpotent Lie algebra on $m$ generators.

\begin{thm}\label{thm: pseudofree classification}
 Let $\lg$ be a non-abelian pseudo-free $\nu$-step nilpotent complex Lie algebra with $\dim (\lg/[\lg,\lg] )= m$.
 \begin{enumerate}
 \item 
 If $\nu \leq 3$, then  $\lg\isom \frn_{m,\nu}$ is a freely nilpotent Lie algebra.
 \item If $\nu \leq 5$, then $\lg$ is isomorphic to one of the finitely many Lie algebras listed in Table~\ref{tab: nilpotency index 5}.
 \item If $\dim \lg\leq 20$, then $\lg$ is isomorphic to one of the Lie algebras listed in Table~\ref{tab: small dim}, which contains $19$ individual Lie algebras and one $1$-parameter family of $20$-dimensional Lie algebras.
 \end{enumerate}
 \end{thm}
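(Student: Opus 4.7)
The plan is to reduce the classification to that of verbal ideals in the free $\nu$-step nilpotent Lie algebra $\frn_{m,\nu}$. By Lemma~\ref{lem: pseudo-free} (referenced in the excerpt), a pseudo-free $\nu$-step nilpotent Lie algebra on $m$ generators is precisely a quotient $\frn_{m,\nu}/I$ with $I$ a verbal ideal. Because $\frn_{m,\nu}$ is generated in weight~$1$, every endomorphism is determined by its action on $\frn_{m,\nu}/[\frn_{m,\nu},\frn_{m,\nu}]\cong\mathbb{C}^m$, so each graded component $\frn_{m,\nu}^{(k)}$ is a $\Gl_m(\mathbb{C})$-representation, and the homogeneous pieces of a verbal ideal are $\Gl_m$-subrepresentations. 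The overall strategy is thus to decompose each $\frn_{m,\nu}^{(k)}$ into Schur summands via the formalism of Section~\ref{sect: verbal}, enumerate the candidate $\Gl_m$-invariant subspaces, and keep only those stable under the Lie bracket with the generating space.

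For part (1), I would compute these decompositions explicitly in weights~$2$ and~$3$. In both weights only a single irreducible Schur summand appears. Consequently, any nonzero verbal ideal $I$ meeting $\frn_{m,\nu}^{(2)}\oplus\frn_{m,\nu}^{(3)}$ nontrivially is forced, by $\Gl_m$-invariance, to contain the entire weight-$k$ component for some $k\in\{2,3\}$, which would trivialise the bracket on the quotient and contradict the non-abelian hypothesis. Hence $I=0$ and $\lg\isom\frn_{m,\nu}$.

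For part (2), I would combine the representation-theoretic enumeration with Zhuravlev's classification \cite{zhuravlev1997} of verbal ideals in free Lie algebras of nilpotency index at most~$5$. Through weight~$5$ every isotypic component of $\frn_{m,5}^{(k)}$ is multiplicity-free, so for each $m$ the set of candidate $\Gl_m$-invariant subspaces in each weight is finite. One then verifies which of these are bracket-closed from below, and reads off the finite list of pseudo-free Lie algebras appearing in Table~\ref{tab: nilpotency index 5}. For part (3), I would bound the pairs $(m,\nu)$ compatible with $\dim\lg\leq 20$ via Witt's formula for $\dim\frn_{m,\nu}^{(k)}$, apply parts (1) and (2) for $\nu\leq 5$, and handle the remaining pairs (notably $m=2$ with larger $\nu$) by direct computation. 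The cut-off at dimension~$20$ arises when two isomorphic $\Gl_m$-irreducible summands first occur in the same weight: the resulting $\mathbb{P}^1$ of $\Gl_m$-invariant subspaces produces a $1$-parameter family of verbal ideals, yielding the $20$-dimensional family of Table~\ref{tab: small dim}.

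The main obstacle is this last point: certifying a genuine $1$-parameter family in dimension~$20$ while ruling out analogous families in every smaller dimension. Ruling them out requires, for each pair $(m,\nu)$ with $\dim\frn_{m,\nu}\leq 20$, checking that any isotypic component of multiplicity $\geq 2$ either does not occur within the relevant weight range, or is forced to lie entirely inside the verbal ideal by bracket-closure with lower-degree generators. Conversely, to certify the dimension-$20$ family one must exhibit the two isomorphic Schur summands explicitly in the relevant $\frn_{m,\nu}^{(k)}$, check that a generic line in the associated $\mathbb{P}^1$ is endomorphism-stable (not just $\Gl_m$-stable), and verify that distinct parameters give non-isomorphic quotient Lie algebras, typically by reading off a continuous invariant of the structure constants.
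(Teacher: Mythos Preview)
Your overall strategy matches the paper's: reduce to verbal ideals, decompose each graded piece as a $\Gl_m$-module (Proposition~\ref{prop:mult}), enumerate invariant subspaces, and use Zhuravlev to propagate relations to higher degrees. Two points where the paper is sharper than your outline. First, you propose checking separately that a $\Gl_m$-invariant subspace of the top graded piece is ``endomorphism-stable''; this is unnecessary, because $\Gl(V)$ is Zariski-dense in $\End(V)$ and the action on $\lf_\nu$ is polynomial, so any $\Gl(V)$-invariant subspace of $\lf_\nu$ is automatically $\End(V)$-invariant and hence verbal once you add $\lf_{\geq\nu+1}$ (Proposition~\ref{prop:verbal}\,\ref{it:verbal.3}). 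Second, for non-isomorphism of the $1$-parameter family you suggest extracting a continuous invariant from structure constants; the paper instead proves once and for all (Lemma~\ref{lem:pseudo-free isomorphic}) that two pseudo-free quotients $\lf/\lh$ and $\lf/\lh'$ are isomorphic iff $\lh=\lh'$, which settles the question immediately. Also, Zhuravlev's paper does not classify verbal ideals up to $\nu=5$; what it supplies is the identity $\lh_{n+k}=U_k(\lh_n)$ (Proposition~\ref{prop:derivations}), which the paper uses to compute, case by case, the degree-$5$ consequences of a degree-$4$ relation. Finally, your argument for part~(1) is fine in spirit but the clause ``trivialise the bracket'' only applies when $\lh$ meets $\lf_2$; if $\lh\supset\lf_3$ the quotient is $\frn_{m,2}$, which is non-abelian---the contradiction there is with the nilpotency index, not with non-abelianness.
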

 	\begin{table}
	\caption{Non-abelian pseudo-free Lie algebras up to dimension $20$ (see Section~\ref{sect: verbal} for notation)}
	\label{tab: small dim}
	\begin{tabular}{clr}
        \toprule
		Dimension & Lie algebras & $\nu$ \\
		\midrule
		$\phantom{1}3$ & $\frn_{2,2}$& $2$\\
		$\phantom{1}5$ & $\frn_{2,3}$& $3$\\
		$\phantom{1}6$ & $\frn_{3,2}$& $2$\\
		$\phantom{1}8$ & $\frn_{2,4}$& $4$\\
		\midrule
		$10$ & $\frn_{4,2}$,& $2$\\
		& $\frn_{2,5}/V_{(4,1)}$& $5$\\
		\midrule
		$11$ & $\frn_{2,6}/\left( V_{(4,1)}\oplus V_{(5,1)}\oplus V_{(4,2)}\right)$& $6$\\
		$12$ & $\frn_{2,5}/V_{(3,2)}$& $5$\\
		\midrule
		$14$ & $\frn_{2,5}$,& $5$\\ 
		& $\frn_{3,3}$ & $3$\\
		\midrule
		$15$ & $\frn_{5,2}$, & $2$\\
		& $\frn_{2,6}/\left(V_{(5,1)}\oplus V_{(4,2)}\right)$& $6$\\
		\midrule
		$17$ & $\frn_{3,4}/V_{(3,1)} $,& $4$\\
		& $\frn_{2,6}/\left( V_{(5,1)}\oplus V_{(3,3)}\right)$,& $6$\\
		& $\frn_{2,6}/\left( V_{(3,2)}\oplus V_{(4,2)}\oplus V_{(3,3)}\right)$,& $6$\\
		& $\frn_{2,7}/\mathfrak{a}$ (Example~\ref{exam: 7-step, 2gen}) & $7$ \\
				\midrule
		$18$ & $\frn_{2,6}/V_{(5,1)}$& $6$\\
		$19$ & $\frn_{2,6}/\left(V_{(4,2)}\oplus V_{(3,3)}\right)$& $6$\\
		\midrule
		$20$ & $\frn_{2,6}/V_{(4,2)}$, & $6$\\
		& $\lg_\mu = \frn_{2,7}/\mathfrak a_\mu$ with $\mu \in \IP^1_\IC$ (Example~\ref{exam: smallest})& $7$ \\
		\bottomrule
	\end{tabular}
\end{table}

The classification could be extended to higher dimensions by the same methods, but Examples~\ref{exam: 6step, 3gen} and \ref{exam: 7 step always infinite} show that for any pair $(m, \nu)$ not covered by Theorem~\ref{thm: pseudofree classification} there exist infinite families of non-isomorphic nilpotent pseudo-free Lie algebras.

Translating back to geometry and adding in the reality condition and the condition for the existence of a lattice in the nilpotent Lie group, we obtain the following classification result on complex parallelisable nilmanifolds with unobstructed deformations:
\begin{cor}\label{cor: classification cx parall}
Let $X$ be a compact complex parallelisable nilmanifold which is not a complex torus and let $\lg$ be the associated Lie algebra of nilpotency index~$\nu$.
 \begin{enumerate}
  \item\label{it:cx.nilp3} If $\nu\leq 3$, then $X$ has unobstructed deformations if and only if $\lg$ is a freely nilpotent Lie algebra.
  \item\label{it:cx.nilp5} If $\nu\leq 5$, then $X$ has unobstructed deformations if and only if $\lg$ is one of the Lie algebras in Table~\ref{tab: nilpotency index 5}, and all of these can occur.
  \item\label{it:cx.dim19} If $\dim X\leq 19$, then $X$ has unobstructed deformations if and only if $\lg$ is one of the finitely many Lie algebras in Table~\ref{tab: small dim}, and all of these can occur.
  \item\label{it:cx.dim20} In dimension~$20$ there are infinitely many different complex homotopy types of complex parallelisable nilmanifolds with unobstructed deformations.
 \end{enumerate}
\end{cor}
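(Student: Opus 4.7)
\medskip

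\noindent\emph{Proof proposal.}
The plan is to deduce the corollary by combining Theorem~\ref{thm: main deformation} and Theorem~\ref{thm: pseudofree classification}, and then handling two orthogonal issues: (i) the existence of a lattice in the associated simply connected complex nilpotent Lie group (needed so that a compact complex parallelisable nilmanifold exists at all), and (ii) for part~\eqref{it:cx.dim20}, the passage from a family of Lie algebras to a family of complex homotopy types.

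For parts~\eqref{it:cx.nilp3}--\eqref{it:cx.dim19}, one direction is immediate: if $X$ has unobstructed deformations, then by Theorem~\ref{thm: main deformation} its Lie algebra $\lg$ is pseudo-free and satisfies $\lg\cong\lgbar$, and by Theorem~\ref{thm: pseudofree classification} $\lg$ must occur in the corresponding table. For the converse, each Lie algebra in Table~\ref{tab: nilpotency index 5} and in the non-family part of Table~\ref{tab: small dim} is a quotient $\frn_{m,\nu}/\mathfrak{a}$ by a verbal ideal $\mathfrak a$ that is visibly defined over $\IQ$ (the tables exhibit generators in terms of Hall basis elements with integer coefficients, or as the $\Gl_m$-submodule $V_{(\lambda)}$, which is rational). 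Hence $\lg$ carries a real (even rational) structure, so complex conjugation with respect to this real form gives an isomorphism $\lg\cong\lgbar$, verifying the reality condition. Moreover, by Malcev's lattice criterion, the underlying real Lie group of the simply connected complex Lie group $G$ (of real dimension $2\dim_\IC\lg$) admits a cocompact lattice $\Gamma$, and $X=\Gamma\backslash G$ is a compact complex parallelisable nilmanifold with the prescribed $\lg$; Theorem~\ref{thm: main deformation} then gives the unobstructedness.

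For part~\eqref{it:cx.dim20}, I would use the $1$-parameter family $\lg_\mu=\frn_{2,7}/\mathfrak a_\mu$, $\mu\in\IP^1_\IC$, from Example~\ref{exam: smallest}. The subfamily indexed by $\mu\in\IQ\cup\{\infty\}$ consists of $\IQ$-forms of pseudo-free Lie algebras, so by the argument above each such $\lg_\mu$ satisfies $\lg_\mu\cong\lgbar_\mu$ and arises from a compact complex parallelisable nilmanifold $X_\mu$ with unobstructed deformations. To upgrade this to infinitely many complex homotopy types it suffices to argue two things: first, that the isomorphism relation on $\{\lg_\mu\}_{\mu\in\IP^1_\IC}$ has at most finite fibres (and hence still has infinite image when restricted to $\mu\in\IQ$), and second, that for compact complex parallelisable nilmanifolds the complex homotopy type is a complete invariant of the associated complex Lie algebra. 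The second point follows from the Malcev/Sullivan correspondence between nilpotent (complex) minimal models and nilpotent (complex) Lie algebras, since the complex de Rham algebra of such $X$ is quasi-isomorphic to $\Lambda^\bullet\lg^\ast$ with the Chevalley--Eilenberg differential.

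The first point is the main obstacle. The natural approach is to rewrite the parametrising set as an orbit space: the relevant verbal ideals of a given graded type sit in a Grassmannian $\mathrm{Gr}$ of $\Gl_2$-subrepresentations of the degree-$7$ part of $\frn_{2,7}$, and two Lie algebras $\lg_\mu,\lg_{\mu'}$ are isomorphic if and only if the corresponding points of $\mathrm{Gr}$ are conjugate under the induced action of $\Gl_2$ (which controls all automorphisms of $\frn_{2,7}$ preserving the grading). A dimension count — comparing $\dim\mathrm{Gr}\ge1$ on the family with the at-most-$4$-dimensional acting group and the invariance forced by the verbal condition — should show that generic orbits are points, so the family contains infinitely many isomorphism classes. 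Combined with the density of $\IQ\subset\IP^1_\IC$ in any Zariski-open subset, this yields infinitely many complex homotopy types, completing~\eqref{it:cx.dim20}.
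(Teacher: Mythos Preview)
Your treatment of parts~\ref{it:cx.nilp3}--\ref{it:cx.dim19} is correct and matches the paper's: combine Theorem~\ref{thm: main deformation} with Theorem~\ref{thm: pseudofree classification}, observe that each listed verbal ideal is defined over~$\IQ$ so that $\lg\cong\lgbar$, and invoke Mal'cev's criterion to realise each Lie algebra geometrically.

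For part~\ref{it:cx.dim20}, however, you have misidentified the difficulty. What you call ``the main obstacle'' --- showing that the $\lg_\mu$ fall into infinitely many isomorphism classes --- is already settled in one line by Lemma~\ref{lem:pseudo-free isomorphic}: two quotients of a free Lie algebra by verbal ideals are isomorphic if and only if the ideals coincide. Since the $\mathfrak a_\mu$ are pairwise distinct verbal ideals, the $\lg_\mu$ are pairwise non-isomorphic; this is exactly how Example~\ref{exam: smallest} concludes and how the paper closes the argument in Corollary~\ref{cor:nilmanifolds20}. Your Grassmannian/orbit approach is both unnecessary and confused: the very fact that the ideals are verbal means they are $\Gl_2$-invariant, so the induced $\Gl_2$-action on the set of such ideals is trivial and every orbit is a point --- no dimension count is needed, and the comparison ``$\dim\mathrm{Gr}\ge 1$ versus a $4$-dimensional acting group'' would in any case point the wrong way.

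One minor correction on the minimal-model step: Nomizu's theorem identifies the complex de~Rham algebra of $X=\Gamma\backslash G$ with the Chevalley--Eilenberg complex $\Lambda^\bullet\lh_\IC^*$ of the \emph{real} Lie algebra $\lh$ complexified, not with $\Lambda^\bullet\lg^*$. Since $\lh_\IC=\lg\oplus\lgbar$ with $[\lg,\lgbar]=0$, your conclusion that the complex homotopy type recovers $\lg$ up to isomorphism still holds, and this is how the paper argues.
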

Part~\ref{it:cx.nilp3} of this corollary was proven for $\nu=2$ in \cite{rollenske2011kuranishi}.
To our knowledge, parts \ref{it:cx.nilp5} to \ref{it:cx.dim20} provide the first examples of complex parallelisable nilmanifolds with unobstructed deformations whose associated Lie algebra is not freely nilpotent. It is not surprising that the examples where the Lie algebra is not freely nilpotent where not found in the more experimental calculations of Kuranishi spaces in \cite{rollenske2011kuranishi}: the one of lowest dimension is a $5$-step nilpotent Lie algebra of dimension~$10$, and the smallest example which is $4$-step nilpotent has dimension~$17$.

It would be interesting to study if this class of complex parallelisable manifolds has other distinguishing features in terms of the existence of special hermitian metrics, see for example \cite{fino-paradiso2023} and references therein.

We do not expect that similar characterisations of unobstructedness can be obtained in the more general category of nilmanifolds with left-invariant complex structure.

The paper is divided into two parts, which can be read independently:
in Section~\ref{sect: verbal}, we introduce pseudo-free Lie algebras and prove our classification results summarised in Theorem~\ref{thm: pseudofree classification}. This part is purely algebraic and does not refer to complex geometry.
In Section~\ref{sect: geometry}, we recap the deformation theory of complex parallelisable nilmanifolds and finally prove our criterion for unobstructedness stated in Theorem~\ref{thm: main deformation}.

\subsection*{Acknowledgements}

Mu'taz Abumathkur studied the problem of characterising unobstructed complex parallelisable nilmanifolds in his Bachelor and Master theses supervised by the second author. He partially proved Corollary~\ref{cor: classification cx parall}~\ref{it:cx.nilp3} for $\nu=3$ with explicit computations using the Hall basis and his continued interest in the problem stimulated the first discussions eventually leading to the present paper. A discussion with Istv\'an Heckenberger greatly helped to put preliminary results into the correct context.
We gratefully acknowledge support by the DFG through grants RO~3734/4-1,  RO~3734/5-1, and PA~4744/1-1.

\section{Pseudo-free Lie algebras}\label{sect: verbal}

We begin this section with a  short account of the theory of varieties of Lie algebras, following \cite[Chapter~4]{bahturin2021identical}, which leads us to the notion of pseudo-free Lie algebras. Then, after recalling some basic facts from  representation theory, we prove our classification result.

We work over the field $\IC$. To simplify our notation, ordinary multiplication is used to denote the Lie bracket and is right-associative,
meaning that \[ w_1\cdots w_s=[w_1,[w_2,\cdots[w_{s-1},w_s]\cdots]] \;. \]

\subsection{Verbal ideals and varieties of Lie algebras}\label{sect: varieties}
A class $\IV$ of Lie algebras is called a \emph{variety} if there exists a collection $R$ of non-associative polynomials such that a Lie algebra $\lg$ is in $\IV$ if and only if $\lg$ satisfies the relations $\{ r = 0 \mid r\in R\}$.
More precisely, if we consider $R$ as a subset of a free Lie algebra $\lf$, then a Lie algebra $\lg$ is in $\IV$ if and only if $R$ lies in the kernel of every homomorphism $\lf\to\lg$.
\begin{exam}\label{exam:variety-nilpotent}
Consider the non-associative polynomial $r = x_1 \cdots x_{\nu+1}\in\lf$, where $\lf$ is the free Lie algebra generated by $x_1, \ldots, x_{\nu+1}$.
Let $\lg$ be a Lie algebra. Then $r$ is in the kernel of every homomorphism $\lf \to \lg$ if and only if $\lg$ is $\nu$-step nilpotent.
The variety $\IV$ associated to $\{r\}$ is the class of $\nu$-step nilpotent Lie algebras.
\end{exam}

Let us fix a variety $\IV$ associated to $R\subset\lf$.
Then every Lie algebra $\lg$ has a largest quotient $\lg(\IV)$ contained in $\IV$.
Concretely, $\lg(\IV)$ is the quotient by the ideal generated by $\varphi(r)$ for all $r\in R$ and all homomorphisms $\phi\colon\lf\to\lg$.

If $\lf_X$ is a free Lie algebra with generating set $X$, we call $\lf_X(\mathbb V)$ the free Lie algebra of $\IV$ with generating set $X$.
Free Lie algebras in $\IV$ satisfy the following universal property, see \cite[Prop.~4.1]{bahturin2021identical}.
\begin{lem}\label{lem:free-univ}
Let $\lf_X(\IV)\in\IV$ be the free Lie algebra on $X$. Then
every map $X\to\lg$ to some $\lg\in\IV$ extends uniquely to a homomorphism $\lf_X(\IV)\to\lg$.
\end{lem}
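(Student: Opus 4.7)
The plan is straightforward: reduce to the universal property of the ordinary (absolutely) free Lie algebra $\lf_X$, and then verify that the resulting homomorphism vanishes on the ideal cutting out $\lf_X(\IV)$.

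First, I would apply the classical universal property of the free Lie algebra $\lf_X$ on the set $X$: the given set map $\phi\colon X\to\lg$ extends uniquely to a Lie algebra homomorphism $\tilde\phi\colon \lf_X\to\lg$. This step is independent of the variety $\IV$.

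Next, I would show that $\tilde\phi$ factors through the quotient $\lf_X(\IV)=\lf_X/I$, where by construction $I$ is the ideal generated by all elements of the form $\psi(r)$ with $r\in R$ and $\psi\colon\lf\to\lf_X$ a Lie algebra homomorphism. It suffices to check that $\tilde\phi$ vanishes on each such generator. For this I would consider the composition $\tilde\phi\circ\psi\colon\lf\to\lg$: this is a Lie algebra homomorphism and, since $\lg\in\IV$, every $r\in R$ lies in its kernel by the very definition of the variety. Hence $\tilde\phi(\psi(r))=0$, and as $\tilde\phi$ is a homomorphism it vanishes on the full ideal $I$. Consequently $\tilde\phi$ descends to a Lie algebra homomorphism $\lf_X(\IV)\to\lg$ extending $\phi$, as required.

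For uniqueness, I would argue that the image of $X$ generates $\lf_X$ as a Lie algebra and that the quotient map $\lf_X\to\lf_X(\IV)$ is surjective, so the image of $X$ generates $\lf_X(\IV)$ as well. Any two Lie algebra homomorphisms $\lf_X(\IV)\to\lg$ agreeing on $X$ must therefore coincide.

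The argument presents no genuine obstacle; it is a formal diagram chase once one has internalised the definition of the relatively free object $\lf_X(\IV)$. The only subtle point to keep in mind is that $I$ is defined using \emph{all} homomorphisms $\psi\colon\lf\to\lf_X$ rather than just the inclusion of a fixed copy of $R$; this is precisely what makes the evaluation argument via $\tilde\phi\circ\psi$ go through for every $\psi$ and ensures that $I$ is indeed a verbal ideal.
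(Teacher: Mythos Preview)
Your proposal is correct and follows essentially the same approach as the paper: extend to the free Lie algebra $\lf_X$, then observe that the composition with any $\psi\colon\lf\to\lf_X$ is a homomorphism to $\lg\in\IV$ and hence kills $R$, so the map descends to $\lf_X(\IV)$. You spell out the composition $\tilde\phi\circ\psi$ and the uniqueness step more explicitly than the paper does, but the argument is identical.
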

\begin{proof}
A map $X\to\lg$ extends uniquely to a homomorphism $\lf_X\to\lg$.
Since $\lg\in\IV$, it follows that $\varphi(r)$ is in the kernel of this map for all $r\in R$ and all homomorphisms $\phi\colon\lf\to\lf_X$.
Therefore, we obtain a (uniquely determined) homomorphism $\lf_X(\IV)\to\lg$ extending $X\to\lg$.
\end{proof}
In particular, the free Lie algebra of $\IV$ with generating set $X$ is fully characterised by the universal property in Lemma~\ref{lem:free-univ}.

Clearly, we have $\lf_X(\IV)\cong\lf_Y(\IV)$ if $X$ and $Y$ have the same cardinality.
By \cite[Prop.~4.2]{bahturin2021identical}, the converse is true as well.
Therefore, we may define the free Lie algebra of $\IV$ of rank~$m$ to be $\lf_X(\IV)$ with $X=\{x_1,\ldots,x_m\}$.

Let $\lg$ be a finitely generated nilpotent Lie algebra. We call $\lg$ \emph{pseudo-free} if $\lg$ appears as a free Lie algebra of some variety $\IV$.

\begin{exam}\label{exam:nilp-pseudo-free}
A freely nilpotent Lie algebra $\gothn_{m,\nu}$ is pseudo-free, since $\gothn_{m,\nu}$ is the free Lie algebra of rank~$m$ inside the variety of $\nu$-step nilpotent Lie algebras considered in Example~\ref{exam:variety-nilpotent}.
\end{exam}

An ideal $\lh$ of a free Lie algebra $\lf$ is called \emph{verbal} or \emph{fully invariant}\footnote{In the theory of associative algebras the analogous concept is known as a T-ideal.}
if $\varphi(\lh)\subset\lh$ for all endomorphisms $\varphi\colon\lf\to\lf$.

The following lemma characterises pseudo-free Lie algebras $\lg$ in terms of an intrinsic condition on $\lg$:

\begin{lem}\label{lem: pseudo-free}
	Let $\lg$ be a nilpotent Lie algebra with minimal set of generators $X=\{x_1,\ldots,x_m\}$.
    Let $V$ be the linear subspace spanned by $X$, so that we have $V\cong\lg/[\lg,\lg]$ under the projection $\lg\to\lg/[\lg,\lg]$.
	Let $\lf$ be the free Lie algebra on $X$.
	Let $\lh\subset\lf$ be the kernel of $\lf\to\lg$, inducing a presentation $\lg=\lf/\lh$.
    Then the following statements are equivalent:
	\begin{enumerate}
		\item\label{it:pseudo-free} The Lie algebra $\lg$ is pseudo-free, that is, $\lg$ is the free Lie algebra of rank~$m$ of some variety $\IV$.
		\item\label{it:univ-ext-self} Every map $X\to\lg$ extends (uniquely) to an endomorphism $\lg\to\lg$. Equivalently, every linear map $V\to\lg$ extends (uniquely) to an endomorphism $\lg\to\lg$.
		\item\label{it:verbal} The ideal $\lh$ is verbal, that is, we have $\varphi(\lh)\subset\lh$ for all endomorphisms $\varphi\colon\lf\to\lf$.
	\end{enumerate}
\end{lem}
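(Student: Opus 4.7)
The plan is to prove the cyclic chain of implications \ref{it:pseudo-free} $\Rightarrow$ \ref{it:univ-ext-self} $\Rightarrow$ \ref{it:verbal} $\Rightarrow$ \ref{it:pseudo-free}, which is the standard route for a lemma of this shape. Throughout, I will let $\pi\colon\lf\to\lg$ denote the projection with kernel $\lh$.

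For \ref{it:pseudo-free} $\Rightarrow$ \ref{it:univ-ext-self} there is essentially nothing to do once Lemma~\ref{lem:free-univ} is in hand: if $\lg$ is free of rank $m$ in some variety $\IV$, then $\lg \in \IV$ and any map $X \to \lg$ extends uniquely to an endomorphism. The linear-map reformulation comes from minimality of $X$: since $\lg$ is nilpotent and $X$ is minimal, the composition $V \hookrightarrow \lg \twoheadrightarrow \lg/[\lg,\lg]$ is an isomorphism, so giving a set map $X\to\lg$ is the same as giving a linear map $V\to\lg$.

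For \ref{it:univ-ext-self} $\Rightarrow$ \ref{it:verbal}, given an endomorphism $\varphi\colon\lf\to\lf$, I consider the composite $\pi\circ\varphi\colon\lf\to\lg$ restricted to $X$. By hypothesis, the resulting map $X\to\lg$ extends to an endomorphism $\psi\colon\lg\to\lg$. Then $\psi\circ\pi$ and $\pi\circ\varphi$ are Lie algebra homomorphisms $\lf\to\lg$ which coincide on the generating set $X$, so they are equal. For every $h\in\lh$, this gives $\pi(\varphi(h))=\psi(\pi(h))=0$, hence $\varphi(h)\in\lh$.

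For \ref{it:verbal} $\Rightarrow$ \ref{it:pseudo-free}, I take $R:=\lh\subset\lf$ and define $\IV$ to be the associated variety. Two things must be checked. First, $\lg\in\IV$: any homomorphism $\phi\colon\lf\to\lg$ lifts, by choosing preimages of $\phi(x_i)$ in $\lf$, to an endomorphism $\tilde\phi\colon\lf\to\lf$ with $\pi\circ\tilde\phi=\phi$; since $\lh$ is verbal, $\tilde\phi(\lh)\subset\lh$, and therefore $\phi(\lh)=\pi(\tilde\phi(\lh))=0$. Second, $\lg$ is free of rank $m$ in $\IV$: comparing with the construction of $\lf_X(\IV)$ in the paper, this algebra is $\lf/J$ where $J$ is the ideal generated by $\varphi(r)$ for $r\in R=\lh$ and $\varphi\in\mathrm{End}(\lf)$. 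Taking $\varphi=\mathrm{id}$ gives $\lh\subset J$, and the verbal property gives the reverse inclusion $J\subset\lh$, so $\lf_X(\IV)=\lg$.

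The only subtle step is \ref{it:verbal} $\Rightarrow$ \ref{it:pseudo-free}, and specifically the lifting trick used to show $\lg\in\IV$: one must remember that homomorphisms $\lf\to\lg$ are freely prescribed on generators, so they can always be lifted to endomorphisms of $\lf$, which is precisely the hypothesis that lets verbality do its work. Everything else reduces to unravelling the universal property established in Lemma~\ref{lem:free-univ}.
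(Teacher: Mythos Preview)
Your proof is correct and follows essentially the same cyclic route as the paper's; the only cosmetic difference is that in \ref{it:verbal} $\Rightarrow$ \ref{it:pseudo-free} you separately verify $\lg\in\IV$ via the lifting trick, whereas the paper skips this since $\lf_X(\IV)\in\IV$ holds by construction. That extra check is redundant but harmless.
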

\begin{proof}
We prove three implications.
\begin{prooflist}
\item[$\text{\ref{it:pseudo-free}}\implies\text{\ref{it:univ-ext-self}}$]
Applying the universal property from Lemma~\ref{lem:free-univ} to $\lg\in\IV$ itself, we immediately see that \ref{it:pseudo-free} implies \ref{it:univ-ext-self}.
\item[$\text{\ref{it:univ-ext-self}}\implies\text{\ref{it:verbal}}$]
An endomorphism $\varphi\colon\lf\to\lf$ corresponds to a map $X\to\lf$. After composing with the projection $\lf\to\lg$, we can extend the map $X\to\lg$ by \ref{it:univ-ext-self} to an endomorphism $\lg\to\lg$, yielding a commutative diagram
\[
\begin{tikzcd}
 \lf \rar{\phi}\dar & \lf\dar\\
 \lg \rar & \lg 
\end{tikzcd}.
\]
In other words, $\varphi$ descends to an endomorphism of $\lg=\lf/\lh$, meaning that $\varphi(\lh)\subset\lh$.
\item[$\text{\ref{it:verbal}}\implies\text{\ref{it:pseudo-free}}$]
We consider the variety $\IV$ defined by the relations $\lh\subset\lf$.
Since $\varphi(\lh)\subset\lh$ for all endomorphisms $\phi$, it follows that $\lf(\IV)$ is by definition just $\lf/\lh=\lg$.
\qedhere
\end{prooflist}
\end{proof}

The following lemma is useful to decide whether two pseudo-free Lie algebras are isomorphic.
\begin{lem}\label{lem:pseudo-free isomorphic}
Let $\lg$ and $\lg'$ be pseudo-free Lie algebras with generators $x_1,\ldots,x_m$ and $y_1,\ldots,y_m$, respectively.
If $\lg\cong\lg'$, then there already exists an isomorphism $g\colon\lg\to\lg'$ such that $g(x_i)=y_i$.
In particular, two quotients $\lf/\lh$ and $\lf/\lh'$ of a free Lie algebra $\lf$ by verbal ideals $\lh,\lh'\subset\lf$
are isomorphic if and only if $\lh=\lh'$.
\end{lem}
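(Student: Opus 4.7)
The plan is to apply the universal property of free Lie algebras in varieties (Lemma~\ref{lem:free-univ}) twice, in opposite directions, and then recognise the two resulting compositions as endomorphisms fixing a generating set.

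First, suppose $\lg\cong\lg'$. Since $\lg$ is pseudo-free, it is the free Lie algebra of rank $m$ of some variety $\IV$. As varieties are closed under isomorphism, we have $\lg'\in\IV$, so Lemma~\ref{lem:free-univ} provides a homomorphism $g\colon\lg\to\lg'$ with $g(x_i)=y_i$. Reversing the roles of $\lg$ and $\lg'$ (using a variety $\IV'$ in which $\lg'$ is free of rank $m$, and observing that $\lg\in\IV'$ thanks to $\lg\cong\lg'$), one obtains a homomorphism $h\colon\lg'\to\lg$ with $h(y_i)=x_i$. Then $h\circ g$ is an endomorphism of $\lg$ fixing each $x_i$; since any Lie algebra homomorphism is determined by its values on a generating set, $h\circ g=\mathrm{id}_{\lg}$, and similarly $g\circ h=\mathrm{id}_{\lg'}$. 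Thus $g$ is the desired isomorphism.

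For the second statement, let $\pi\colon\lf\to\lf/\lh$ and $\pi'\colon\lf\to\lf/\lh'$ denote the quotient maps, and write $\bar x_i$ and $\bar x_i'$ for the images of the free generators $x_1,\ldots,x_m$ of $\lf$. By Lemma~\ref{lem: pseudo-free}, both quotients are pseudo-free with these generators, so if $\lf/\lh\cong\lf/\lh'$ the first part yields an isomorphism $g$ with $g(\bar x_i)=\bar x_i'$. The composition $g\circ\pi\colon\lf\to\lf/\lh'$ agrees with $\pi'$ on the generators of $\lf$ and therefore equals $\pi'$; comparing kernels gives $\lh=\lh'$. The converse implication is trivial.

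The one substantive point is that the definition of pseudo-free via the universal property in Lemma~\ref{lem:free-univ} is flexible enough to absorb an abstract isomorphism $\lg\cong\lg'$ into the concrete data of where the generators go — the variety $\IV$ in which $\lg$ is free automatically also contains $\lg'$, so the universal property applies to maps with target $\lg'$. Once this is noted, the proof is just the standard trick of producing maps in both directions and composing, and no real obstacle remains.
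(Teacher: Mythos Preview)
Your proof is correct and follows essentially the same approach as the paper: produce homomorphisms in both directions sending generators to generators, then observe that the compositions fix generating sets and hence are identities. The only cosmetic difference is that the paper invokes the self-extension property of Lemma~\ref{lem: pseudo-free}\,\ref{it:univ-ext-self} (pulling the $y_i$ back through a chosen isomorphism $f$ and extending inside $\lg$), whereas you invoke Lemma~\ref{lem:free-univ} directly after noting $\lg'\in\IV$; these are two phrasings of the same universal property.
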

\begin{proof}
Let $f\colon\lg\to\lg'$ be an isomorphism.
By property~\ref{it:univ-ext-self} of Lemma~\ref{lem: pseudo-free},
there exists an endomorphism $h\colon\lg\to\lg$ such that $h(x_i)=f^{-1}(y_i)$.
Hence, $g=f\circ h\colon\lg\to\lg'$ is a homomorphism satisfying $g(x_i)=y_i$.
Analogously, there exists a homomorphism $g'\colon\lg'\to\lg$ such that $g'(y_i)=x_i$.
The endomorphisms $g'\circ g$ and $g\circ g'$ fix the generators $x_1,\ldots,x_m$ and $y_1,\ldots,y_m$, respectively,
and are thus the identities on $\lg$ and $\lg'$, meaning that $g$ and $g'$ are inverse to each other.

The claim on quotients by verbal ideals follows using property~\ref{it:verbal} of Lemma~\ref{lem: pseudo-free} and considering the generating sets induced by $\lf$.
\end{proof}

\subsection{Representation theory}

The aim of this section is to connect the study of verbal ideals to the study of irreducible $\Gl(V)$-representations. Following \cite[Chapter~3]{bahturin2021identical} and \cite[Chapter~8]{Fulton_1996} we first review some well-known facts about representation theory in our setting.

Let $\lf$ be a free Lie algebra over $X= \{x_1,\dots,x_m\}$. As a consequence of the Jacobi identity, every element in $\lf$
can be written as a linear combination of monomials $x_{i_1}\cdots x_{i_n}$ with $i_1,\ldots,i_n\in\{1,\ldots,m\}$.
We say that such a monomial $x_{i_1}\cdots x_{i_n}$ has multidegree $(d_1,\ldots,d_m)$ if $j$ appears exactly $d_j$~times
among the indices $i_1,\ldots,i_n$ for each $j\in\{1,\ldots,m\}$. In particular, $d_1+\cdots+d_m=n$.
Since the Jacobi identity respects this multigrading, $\lf$ decomposes as a vector space into the direct sum of multihomogeneous pieces
$\lf_{(d_1,\ldots,d_m)}$:
\begin{equation}\label{eq: multideg} \lf=\bigoplus_{n\ge1} \lf_n, \quad\text{where}\quad \lf_n=\bigoplus_{d_1+\cdots+d_m=n}\lf_{(d_1,\ldots,d_m)}. \end{equation}
Let $V= \langle x_1, \dots,x_m\rangle$ be the vector space generated by $X$. Then every automorphism in $\Gl(V)$ can be extended to a Lie algebra automorphism of $\lf$ and we obtain a representation $\Gl(V) \to \Aut(\lf)$ preserving the homogeneous components of the decomposition $\lf = \bigoplus_{n\geq 1} \lf_n$.
Indeed, the multidegree decomposition \eqref{eq: multideg} is exactly the weight space decomposition of the finite-dimensional polynomial representation of $\Gl(V)$ on $\lf_n$. We are interested in a decomposition of this representation into a finite direct sum of irreducible representations.

It is well known that such irreducible representations can be  described in terms of partitions of $n$: 
let $\lambda \vdash n$ be a partition of $n$, that is, a sequence $\lambda = (\lambda_1, \dots, \lambda_m)$ of weakly decreasing non-negative integers such that $\lambda_1 + \dots + \lambda_m = n$. Then, as described in \cite[Chapter~8]{Fulton_1996}, we can associate to $\lambda$ an irreducible representation  of the symmetric group $S_n$, the so-called Specht module $T_\lambda$. The corresponding $\Gl(V)$-representation is the  Schur module
\[
V_\lambda = V^{\otimes n} \otimes_{\IC S_n} T_\lambda,
\]
where $\IC S_n$ denotes the complex group algebra. We often suppress the zero entries of $\lambda$ when referring to the representation $V_\lambda$. The proof of the following theorem can be found in \cite[p.\ 114]{Fulton_1996}.

\begin{thm} Let $V$ be a complex vector space of dimension $m$.
	\begin{enumerate}
		\item The representation $V_\lambda$ of $\Gl(V)$ is an irreducible representation of highest weight $(\lambda_1, \dots, \lambda_m)$.
		 In particular, two representations $V_\lambda$ and $V_\mu$ are isomorphic if and only if $\lambda = \mu$.
		\item The $V_\lambda$ are all irreducible polynomial representations of $\Gl(V)$.
		
	\end{enumerate}
\end{thm}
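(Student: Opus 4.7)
The plan is to derive this classical theorem from Schur--Weyl duality, which matches the irreducible $\Gl(V)$-summands of $V^{\otimes n}$ with the irreducible $S_n$-representations. First I would construct the Specht modules intrinsically via Young symmetrizers: to a partition $\lambda\vdash n$ together with a fixed standard Young tableau, associate $c_\lambda = a_\lambda b_\lambda \in \IC S_n$, where $a_\lambda$ symmetrizes over the row stabilizer and $b_\lambda$ antisymmetrizes over the column stabilizer. A standard calculation shows $c_\lambda^2 = n_\lambda c_\lambda$ with $n_\lambda \ne 0$ and $c_\lambda\,\IC S_n\,c_\lambda = \IC c_\lambda$, which forces $T_\lambda := \IC S_n \cdot c_\lambda$ to be irreducible and distinct partitions to yield non-isomorphic modules; a dimension count against the regular representation then shows that the $T_\lambda$ exhaust the irreducible $S_n$-representations.

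Next I would invoke Schur--Weyl duality: the commuting $\Gl(V)$- and $S_n$-actions on $V^{\otimes n}$ generate each other's commutants in $\operatorname{End}(V^{\otimes n})$, so one obtains a $\Gl(V) \times S_n$-equivariant decomposition
\[
V^{\otimes n} \;\cong\; \bigoplus_{\substack{\lambda\,\vdash\,n \\ \ell(\lambda)\,\leq\,m}} V_\lambda \otimes T_\lambda,
\]
in which every nonzero $V_\lambda$ is automatically irreducible and the summands are pairwise non-isomorphic. To pin down the highest weight, I would fix a weight basis $v_1,\ldots,v_m$ of $V$ and apply $c_\lambda$ to the monomial $v_1^{\otimes \lambda_1} \otimes \cdots \otimes v_m^{\otimes \lambda_m}$; a direct combinatorial check shows that the result is nonzero, has weight $(\lambda_1,\ldots,\lambda_m)$, and is annihilated by the strictly upper-triangular part of the Lie algebra of $\Gl(V)$. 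This simultaneously pins down $V_\lambda \ne 0$ precisely when $\ell(\lambda) \leq m$, yielding part~(1).

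For part~(2), any polynomial representation decomposes under the central torus $\IC^\ast \cdot \operatorname{id}_V$ into pieces homogeneous of some degree $n$, and each homogeneous piece embeds into a direct sum of copies of $V^{\otimes n}$. Combined with complete reducibility of polynomial $\Gl(V)$-representations, which follows from Weyl's unitary trick applied to the maximal compact $U(m) \subset \Gl(V)$, Schur--Weyl immediately implies that every irreducible polynomial representation is isomorphic to some $V_\lambda$. The main technical obstacle I foresee is the explicit identification of the highest-weight vector in $V_\lambda$: the rest is formal consequence of the duality plus complete reducibility, but the verification that $c_\lambda$ applied to $v_1^{\otimes\lambda_1} \otimes \cdots \otimes v_m^{\otimes\lambda_m}$ yields a nonzero highest-weight vector is a genuine combinatorial computation with the row and column stabilizers of the tableau.
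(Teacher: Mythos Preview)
The paper does not prove this statement at all; it simply refers to Fulton's \emph{Young Tableaux}, where the argument proceeds via the combinatorics of semistandard tableaux and the straightening law rather than through Schur--Weyl duality. Your route is a correct and standard alternative, closer in spirit to Fulton--Harris than to the reference actually cited. The one place that deserves slightly more care is the assertion in part~(2) that every homogeneous degree-$n$ polynomial representation embeds into copies of $V^{\otimes n}$: this is most cleanly justified via matrix coefficients, which land in $S^n(\End(V)^*)$, itself a quotient of $(V^*)^{\otimes n}\otimes V^{\otimes n}$, and then the complete reducibility you already invoke finishes the job; but it is not quite as immediate as your phrasing suggests.
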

As a consequence, every  homogeneous component $\lf_n$ admits a decomposition
\[
\lf_n = \bigoplus_{\lambda\vdash n} m_\lambda V_\lambda
\]
for certain multiplicities $m_\lambda$. 
A complete list of the multiplicities $m_\lambda$ up to $n =10$ can be found in \cite{thrall1942symmetrized}. In the following proposition, we only provide the multiplicities relevant to our considerations.

\begin{prop}\label{prop:mult}
 Let $\lf$ be a free Lie algebra on $m$ generators. Then its components $\lf_n$ of degree $n\leq 7$ decompose as $\Gl(V)$-representations in the following way:
	\begin{align*}
		\lf_2 &= V_{(1,1)},\\
		\lf_3 &= V_{(2,1)},\\
		\lf_4 &= V_{(3,1)} \oplus V_{(2,1,1)},\\
		\lf_5 &= V_{(4,1)} \oplus V_{(3,2)} \oplus V_{(3,1,1)} \oplus V_{(2,2,1)} \oplus V_{(2,1,1,1)},\\
		\lf_6 &= V_{(5,1)} \oplus V_{(4,2)} \oplus V_{(3,3)} \oplus 2V_{(4,1,1)} \oplus 3V_{(3,2,1)} 
		\\
		& \qquad\qquad\qquad 
		\oplus V_{(3,1,1,1)} \oplus 2V_{(2,2,1,1)} \oplus V_{(2,1,1,1,1)},\\
		\lf_7 &= V_{(6,1)} \oplus 2V_{(5,2)} \oplus 2V_{(4,3)} \oplus 2V_{(5,1,1)}\oplus 5V_{(4,2,1)} \oplus \cdots,
	\end{align*} 
where $V_{(\lambda_1, \dots, \lambda_r)}$ is the irreducible representation defined above for $r\leq m$ and for $r>m$ we have $V_{(\lambda_1, \dots, \lambda_r)} = 0$.
\end{prop}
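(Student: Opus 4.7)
The plan is to compute the $\Gl(V)$-character of $\lf_n$ using the classical Witt character formula and then to expand the result in the basis of Schur polynomials. Specifically, comparing characters on the two sides of the Poincar\'e--Birkhoff--Witt isomorphism $T(V) \cong U(\lf) \cong S(\lf)$ and inverting the resulting generating-function identity (M\"obius inversion in the $\lambda$-ring of $\Gl(V)$-representations) yields
\[
\operatorname{ch}(\lf_n) \;=\; \frac{1}{n}\sum_{d\mid n}\mu(d)\,p_d^{n/d},
\]
where $p_d = \operatorname{tr}(g^d) = \sum_i \lambda_i^d$ is the $d$-th Newton power sum in the eigenvalues of $g\in\Gl(V)$ and $\mu$ denotes the M\"obius function.

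Next, by the Frobenius character formula, the power-sum symmetric function $p_\rho$ for the partition $\rho=(d^{n/d})\vdash n$ decomposes as
\[
p_d^{n/d} \;=\; \sum_{\lambda\vdash n} \chi^\lambda(\rho)\, s_\lambda,
\]
where $\chi^\lambda(\rho)$ is the value of the irreducible $S_n$-character indexed by $\lambda$ at the cycle type $\rho$, and $s_\lambda = \operatorname{ch}(V_\lambda)$ is the corresponding Schur polynomial. Substituting into Witt's formula and collecting coefficients of $s_\lambda$ then reads off the multiplicities $m_\lambda$ directly from the character tables of the symmetric groups.

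The remaining work is a case-by-case verification. For $n=2$ one immediately obtains $\operatorname{ch}(\lf_2) = \tfrac{1}{2}(p_1^2-p_2) = s_{(1,1)}$, and for $n=3$ one gets $\operatorname{ch}(\lf_3) = \tfrac{1}{3}(p_1^3-p_3) = s_{(2,1)}$. The cases $n=4,5,6$ proceed identically, using only the (small) character tables of $S_4,S_5,S_6$; the statement for $n=7$ is intentionally truncated, so only the listed leading summands need to be verified. The final clause $V_{(\lambda_1,\dots,\lambda_r)}=0$ for $r>m=\dim V$ is a standard property of Schur functors, since a Schur module indexed by a partition with more than $\dim V$ parts vanishes. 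The principal obstacle is thus purely clerical bookkeeping: the number of partitions and the character-table values grow as $n$ grows, but nothing conceptually new appears, and in fact the full list through $n=10$ is recorded in Thrall's paper cited above.
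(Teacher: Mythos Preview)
Your argument is correct and is essentially the same as the paper's: the paper cites the multiplicity formula
\[
m_\lambda = \frac{1}{n}\sum_{d\mid n}\mu(d)\,\chi^\lambda(\tau^{n/d})
\]
from Bahturin (with $\tau$ an $n$-cycle, so $\tau^{n/d}$ has cycle type $(d^{n/d})$), which is exactly what your Witt--Frobenius expansion produces, and then evaluates the characters via Murnaghan--Nakayama where you appeal to the character tables directly. The only difference is that you sketch the PBW derivation of the formula rather than citing it.
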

\begin{proof}
	The multiplicities $m_\lambda$ can be computed with the following identity proven in \cite[Prop.~3.22]{bahturin2021identical}: let $\lambda$ be a partition of $n$ and let $\chi$ be the character of the corresponding $S_n$-representation $T_\lambda$. Then
	\[
	m_\lambda = \frac{1}{n}\sum_{d\mid n}\mu(d)\chi(\tau^{n/d}),
	\]
	where $\tau\in S_n$ is a cycle of length $n$ and $\mu$ is the Möbius function.
	The characters in the above formula can be evaluated with the so-called Murnaghan--Nakayama rule explained in \cite[Section~7.17]{Stanley_Fomin_1999}.
\end{proof}

\begin{rem}\label{rem: mult} Schocker determined all partitions $\lambda\vdash n$ with $m_\lambda =1$ in  \cite{schocker2003embeddings}.
Besides the cases already appearing in the above proposition the only partitions with $m_\lambda=1$ are $(n-1,1)$ and $(2,1,\dots,1)$ and the two partitions $(2,2,2,2)$ and $(4,4)$ for $n=8$.
\end{rem}

 Let us return to the study of verbal ideals. We denote by $\lf_{\geq\nu}$ the ideal  $\bigoplus_{n\geq\nu} \lf_n$.
\begin{prop}\label{prop:verbal}
	Let $\lf$ be  a free algebra generated by $m$ elements. Then the following hold:
	\begin{enumerate}
		\item\label{it:verbal.1} If $\lh \subset \lf$ is a verbal ideal containing $\lf_{\geq \nu+1}$, then the ideal 
		\[\lh/\lf_{\geq \nu+1}\subset \lf/\lf_{\geq \nu+1}=\frn_{m,\nu}\]
		is $\Gl(V)$-invariant.
		\item\label{it:verbal.2} If the multiplicities $m_\lambda$ in the decomposition $\lf/ \lf_{\geq \nu+1} = \bigoplus m_\lambda V_\lambda$ are at most~$1$, then there exist only finitely many verbal ideals $\lh \subset \lf$ containing $\lf_{\geq \nu+1}$.
		\item\label{it:verbal.3} The verbal ideals $\lh\subset \lf$ satisfying $\lf_{\geq\nu+1} \subset \lh \subset \lf_{\geq\nu}$ are precisely the ideals of the form $U \oplus \lf_{\geq\nu+1}$, where $U$ is a $\Gl(V)$-invariant subspace of $\lf_\nu$.
	\end{enumerate}
\end{prop}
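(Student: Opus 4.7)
The plan is to treat the three parts in order, exploiting the fact that the $\Gl(V)$-action on $\lf$ is the restriction of the endomorphism action: any linear automorphism $g\in\Gl(V)$ extends uniquely to a Lie algebra automorphism of $\lf$ by the universal property of the free Lie algebra.

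For part~\ref{it:verbal.1}, a verbal ideal $\lh$ is by definition invariant under all endomorphisms of $\lf$, and in particular under the subgroup $\Gl(V)$. Since $\lf_{\geq\nu+1}$ is also $\Gl(V)$-stable, the quotient $\lh/\lf_{\geq\nu+1}$ is a $\Gl(V)$-invariant subspace of $\frn_{m,\nu}$. Part~\ref{it:verbal.2} then follows by viewing~\ref{it:verbal.1} as an injection from verbal ideals containing $\lf_{\geq\nu+1}$ into $\Gl(V)$-invariant subspaces of $\frn_{m,\nu}=\bigoplus_\lambda m_\lambda V_\lambda$; the multiplicity-one hypothesis and Schur's lemma force each such subspace to be the direct sum of a subcollection of the finitely many irreducible summands, leaving only finitely many possibilities.

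For part~\ref{it:verbal.3}, the forward direction is immediate from~\ref{it:verbal.1}: any verbal ideal $\lh$ with $\lf_{\geq\nu+1}\subset\lh\subset\lf_{\geq\nu}$ decomposes as $(\lh\cap\lf_\nu)\oplus\lf_{\geq\nu+1}$ with $\Gl(V)$-invariant first summand. Conversely, given a $\Gl(V)$-invariant $U\subset\lf_\nu$, I first note that $\lh:=U\oplus\lf_{\geq\nu+1}$ is automatically an ideal by degree reasons, since $[\lf,U]\subset\lf_{\geq\nu+1}$ and $[\lf,\lf_{\geq\nu+1}]\subset\lf_{\geq\nu+1}$. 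To check verbality, I would fix $\phi\in\operatorname{End}(\lf)$ and decompose $\phi(x_i)=v_i+r_i$ with $v_i\in V$ and $r_i\in\lf_{\geq 2}$. Since $\phi$ sends $\lf_n$ into $\lf_{\geq n}$, we already have $\phi(\lf_{\geq\nu+1})\subset\lf_{\geq\nu+1}\subset\lh$, so it remains to control $\phi(U)$. A multilinear expansion of the bracket shows that for $w\in\lf_\nu$, the degree-$\nu$ part of $\phi(w)$ equals $\bar g(w)$, where $\bar g$ is the Lie algebra endomorphism of $\lf$ extending the linear map $g\colon V\to V$, $x_i\mapsto v_i$, and all remaining contributions lie in $\lf_{\geq\nu+1}$.

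The main obstacle is then upgrading the $\Gl(V)$-invariance of $U$ to invariance under the possibly non-invertible $\bar g$. I would argue by Zariski density: the action of $\Gl(V)$ on the finite-dimensional polynomial representation $\lf_\nu$ extends polynomially to the monoid $\operatorname{End}(V)$, the condition that a linear map preserves the fixed subspace $U$ is Zariski-closed, and $\Gl(V)$ is Zariski-dense in $\operatorname{End}(V)$. This yields $\bar g(U)\subset U$ for arbitrary $g\in\operatorname{End}(V)$, and combined with the previous paragraph gives $\phi(\lh)\subset\lh$, as required.
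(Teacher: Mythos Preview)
Your proof is correct and follows essentially the same route as the paper: parts~\ref{it:verbal.1} and~\ref{it:verbal.2} are identical, and for the converse in~\ref{it:verbal.3} both you and the paper upgrade $\Gl(V)$-invariance of $U\subset\lf_\nu$ to $\End(V)$-invariance via density (you use Zariski density, the paper uses continuity in the Euclidean topology---either works since the action is polynomial). Your treatment is in fact more explicit than the paper's in spelling out why a general endomorphism $\phi$ acts on $\lf_\nu$ modulo $\lf_{\geq\nu+1}$ through its linear part $g\in\End(V)$.
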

\begin{proof}
	By definition, an ideal is verbal if and only if it is preserved under every endomorphism of $\lf$. In particular, every verbal ideal is $\Gl(V)$-invariant. This proves \ref{it:verbal.1}.
	
	By Schur's Lemma, the decomposition of a $\Gl(V)$-representation into irreducible representations is unique if all multiplicities are at most~$1$. Combined with \ref{it:verbal.1} this implies \ref{it:verbal.2}.

	Clearly, every ideal $\lh$ with $\lf_{\geq \nu+1} \subset \lh \subset \lf_{\geq \nu}$ which is preserved by all endomorphisms of $\lf$ is of the form $U \oplus \lf_{\geq \nu+1}$ for some $\Gl(V)$-invariant subspace $U \subset \lf_\nu$. It remains to show that every ideal of the form $U \oplus \lf_{\geq \nu+1}$ is verbal. Since the action of $\End(V)$ on $\lf$ is continuous and $\Gl(V)\subset\End(V)$ is dense, every $\Gl(V)$-invariant subspace of $U \subset\lf_\nu$ is also $\End(V)$-invariant. Together with the fact that the ideal $\lf_{\geq \nu}$ is verbal, this implies that every ideal of the form $U \oplus \lf_{\geq \nu+1}$ is preserved by all endomorphisms of $\lf$ and is thus verbal.
	\end{proof}

\begin{cor}\label{cor:3step}
A nilpotent Lie algebra of nilpotency index $\nu\leq 3$ is pseudo-free if and only if it is freely nilpotent.
\end{cor}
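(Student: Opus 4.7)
The plan is to combine Lemma~\ref{lem: pseudo-free} with Proposition~\ref{prop:mult} and a standard grading observation. The ``if'' direction is immediate from Example~\ref{exam:nilp-pseudo-free}, so I focus on the converse.

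First I would write $\lg = \lf/\lh$ via Lemma~\ref{lem: pseudo-free}, where $\lf$ is the free Lie algebra on a minimal generating set $X = \{x_1, \ldots, x_m\}$ spanning $V$ and $\lh$ is verbal. Minimality of $X$ forces $\lh \subset \lf_{\geq 2}$, while the hypothesis $\nu \leq 3$ forces $\lf_{\geq 4} \subset \lh$. Since $\lh$ is in particular $\Gl(V)$-invariant and the scalar subgroup of $\Gl(V)$ acts on $\lf_n$ by distinct characters $t \mapsto t^n$, a Vandermonde argument shows that $\lh$ is automatically graded: $\lh = \lh_2 \oplus \lh_3 \oplus \lf_{\geq 4}$ with each $\lh_n$ a $\Gl(V)$-invariant subspace of $\lf_n$.

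Next I would apply Proposition~\ref{prop:mult}: both $\lf_2 = V_{(1,1)}$ and $\lf_3 = V_{(2,1)}$ are irreducible, so Schur's lemma restricts $\lh_n$ to $0$ or $\lf_n$. Of the four resulting combinations, the pair $\lh_2 = \lf_2$, $\lh_3 = 0$ is excluded because the identity $[\lf_1,\lf_2] = \lf_3$ inside $\lf$ together with the ideal property forces $\lh_3 \supset \lf_3$. The remaining three pairs produce, respectively, $\lg \cong \frn_{m,3}$, $\frn_{m,2}$, and $\frn_{m,1}$, each of which is freely nilpotent.

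I do not expect a real obstacle: the entire argument is driven by the irreducibility of $\lf_2$ and $\lf_3$. The same strategy already breaks down at $\nu = 4$, since the decomposition $\lf_4 = V_{(3,1)} \oplus V_{(2,1,1)}$ permits genuinely non-free verbal ideals, which is why higher nilpotency index requires the more delicate analysis carried out in the rest of Section~\ref{sect: verbal}.
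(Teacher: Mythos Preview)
Your argument is correct and follows essentially the same route as the paper: the paper invokes Proposition~\ref{prop:verbal} (which packages the $\Gl(V)$-invariance and Schur's lemma steps you spell out) together with Proposition~\ref{prop:mult} to conclude. Your version simply unpacks Proposition~\ref{prop:verbal} by giving the grading argument and the case exclusion explicitly, so there is no substantive difference.
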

\begin{proof}
	We have already seen in Example~\ref{exam:nilp-pseudo-free} that every freely nilpotent Lie algebra is pseudo-free.
    By Lemma~\ref{lem: pseudo-free}, a pseudo-free Lie algebra on $m$ generators is the quotient of $\lf$ by a verbal ideal.
	By Proposition~\ref{prop:mult}, the induced $\Gl(V)$-representations on $\lf_2$ and $\lf_3$ are irreducible. Thus Proposition~\ref{prop:verbal} implies that the only pseudo-free Lie algebras with $\nu \leq 3$ are freely nilpotent. 
\end{proof}

\begin{cor}\label{cor: tuple}
For every tuple $(m,\nu)$ with $\nu \leq 5$ or $(m,\nu) = (2,6)$ there exist only finitely many pseudo-free Lie algebras with $m$ generators of nilpotency index $\nu$.
\end{cor}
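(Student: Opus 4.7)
The plan is to reduce the question to counting verbal ideals of a bounded shape and then invoke the multiplicity-one observation from Proposition~\ref{prop:verbal}~\ref{it:verbal.2}. By Lemma~\ref{lem: pseudo-free}, every pseudo-free Lie algebra with $m$ generators is of the form $\lf/\lh$, where $\lf$ is the free Lie algebra on $m$ generators and $\lh\subset\lf$ is a verbal ideal; moreover, by Lemma~\ref{lem:pseudo-free isomorphic}, two such quotients are isomorphic if and only if the verbal ideals coincide. So it suffices to bound the number of verbal ideals $\lh\subset\lf$ such that $\lf/\lh$ has nilpotency index $\nu$. Any such $\lh$ necessarily contains $\lf_{\geq\nu+1}$, so it is enough to count verbal ideals containing $\lf_{\geq\nu+1}$.

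Next I would apply Proposition~\ref{prop:verbal}~\ref{it:verbal.2}: if the decomposition of $\lf/\lf_{\geq\nu+1}=\bigoplus_{n\leq\nu}\lf_n$ into irreducible $\Gl(V)$-representations is multiplicity-free, then only finitely many such verbal ideals exist. It then remains to verify the multiplicity-one condition in each listed case. Reading off Proposition~\ref{prop:mult} directly shows that $\lf_n$ is a sum of pairwise non-isomorphic irreducibles for all $n\leq 5$, so the case $\nu\leq 5$ is immediate (for any $m$).

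For $(m,\nu)=(2,6)$ the key point is the vanishing convention: $V_{(\lambda_1,\dots,\lambda_r)}=0$ whenever $r>m=2$. Applying this to the decomposition of $\lf_6$ given in Proposition~\ref{prop:mult} kills all summands indexed by partitions with more than two parts, leaving only $V_{(5,1)}\oplus V_{(4,2)}\oplus V_{(3,3)}$, which is again multiplicity-free. Together with the already multiplicity-free components $\lf_n$ for $n\leq 5$, this shows that $\lf/\lf_{\geq 7}$ is multiplicity-free when $m=2$, and Proposition~\ref{prop:verbal}~\ref{it:verbal.2} again yields finiteness.

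I do not expect any genuine obstacle here; the argument is a packaging of the lemmas and propositions already established. The only thing to be careful about is the reduction step, namely that every pseudo-free Lie algebra of nilpotency index exactly $\nu$ arises from a verbal ideal $\lh$ satisfying $\lf_{\geq\nu+1}\subset\lh\subsetneq\lf_{\geq\nu}\oplus\lf_{\geq\nu+1}$, so that counting verbal ideals above $\lf_{\geq\nu+1}$ (which is finite) a fortiori bounds the number of pseudo-free Lie algebras of the prescribed rank and nilpotency index.
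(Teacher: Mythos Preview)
Your proposal is correct and follows essentially the same route as the paper's proof, which is the one-line ``This follows directly from Proposition~\ref{prop:verbal}~\ref{it:verbal.2} and Proposition~\ref{prop:mult}.'' You have merely unpacked the implicit reduction steps (via Lemma~\ref{lem: pseudo-free} and Lemma~\ref{lem:pseudo-free isomorphic}) and spelled out why the $m=2$ constraint kills the higher multiplicities in $\lf_6$; nothing in your argument departs from the paper's approach.
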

\begin{proof}
	This follows directly from Proposition~\ref{prop:verbal}~\ref{it:verbal.2} and  Proposition~\ref{prop:mult}.
\end{proof}

\begin{exam}\label{exam: smallest}
Consider the free $7$-step nilpotent Lie algebra $\frn_{2,7}$ on two generators and decompose (non-uniquely) $\lf_7$ into irreducible representations as in Proposition~\ref{prop:mult}. Now pick highest weight vectors $v_1$ and $v_2$ for the two representations isomorphic to $V_{(5,2)}$, which can be chosen to be non-associative polynomials with rational coefficients in the generators $x_1$ and $x_2$. Then for any $\mu = (\mu_1:\mu_2)\in \IP^1_\IC$ we get an irreducible representation $U_\mu\subset \lf_7$, also isomorphic to $V_{(5,2)}$, generated by the linear combination $\mu_1 v_1 + \mu_2 v_2$. Consider the ideal
\[
\mathfrak a_\mu = V_{(5,1)} \oplus V_{(6,1)} \oplus 2V_{(5,2)} \oplus U_\mu \subset \frn_{2,7}.
\]
Then  $\{\lg_\mu = \frn_{2,7}/\mathfrak a_\mu\}_{\mu\in \IP^1_\IC}$ defines a family of $20$-dimensional pseudo-free Lie algebras.
The Lie algebras in this $1$-parameter family are pairwise non-isomorphic by Lemma~\ref{lem:pseudo-free isomorphic}.
\end{exam}

\begin{exam}\label{exam: 6step, 3gen}
Consider the free $6$-step nilpotent Lie algebra $\frn_{m,6}$ on $m\geq 3$ generators. Since the representation $V_{(4,1,1)}$ appears with multiplicity 2 we can define a 1-parameter family of irreducible representations $V_{(4,1,1)}\isom U_\mu \subset \lf_6$ as in Example~\ref{exam: smallest}. The Lie algebras $\frn_{m,6}/U_\mu$ provide infinitely many non-isomorphic pseudo-free Lie algebras with $m$ generators and $\nu =6$.
\end{exam}

\begin{exam}\label{exam: 7 step always infinite}
Combining Remark \ref{rem: mult} with \cite[Proposition~2]{klyachko1974lie} shows that for $n \geq 7$ the irreducible representation corresponding to $(n-2,2)$ always appears with multiplicity at least $2$. Hence, by the same construction as in Example~\ref{exam: smallest} one can construct infinitely many pseudo-free Lie algebras for every tuple $(m,\nu)$ not listed in Corollary~\ref{cor: tuple}.
\end{exam}

\subsection{The Lie algebra of derivations}
In Proposition~\ref{prop:verbal}~\ref{it:verbal.3} we have seen that pseudo-free nilpotent Lie algebras with relations appearing only in the homogeneous component of highest degree can be described purely in terms of representation theory. To control more general pseudo-free Lie algebras
 we recall some results by Zhuravlev \cite{zhuravlev1997}.

Let $R$ be a set of non-associative polynomials, which determines a variety of Lie algebras $\mathbb V$. Let $\lf$ be a free Lie algebra with finite generating set $X$. Then $R$ determines a unique verbal ideal $\lh \subset \lf$ with the property that $\lf(\mathbb V) = \lf/\lh$.
The ideal $\lh$ is homogeneous as well, because it is invariant under the action of $\Gl(V)$, so we can write $\lh = \bigoplus_n \lh_n$ where $\lh_n = \lh\cap \lf_n$.

We denote by $\mathcal{U} =\mathcal{U}(\Der (\lf))$ the universal enveloping algebra of the Lie algebra  $\Der (\lf)$ of derivations of $\lf$. Considering $R$ as a subset of $\lf$ we can define the ideal
\[
 \mathcal{U}(R) = \langle\delta(r) \,|\, r \in R,\, \delta \in \mathcal{U}\rangle\subset \lf.
\]
The main theorem of \cite{zhuravlev1997} states that in the above situation we have
	\(
	\lh = \mathcal{U}(R).
	\)
	
This alternative description of verbal ideals makes it possible to compute the verbal ideal generated by a sum of  irreducible representations $V_\lambda$ of some fixed degree.
To this purpose we consider the grading
\[
\Der( \lf) = \bigoplus_{n\geq 0} I_n,
\]
where $I_n =\{ \delta \in \Der (\lf) \, | \, \delta(X) \subset \lf_{n+1}\}$. The ideal \[ I = \bigoplus_{n\ge1} I_n = \{\delta \in \Der \lf \, | \, \delta(X) \subset [\lf, \lf]\} \] induces a grading on its universal enveloping algebra
\[
\mathcal{U}(I) = \bigoplus_{n \geq 1} U_n = I_1 \oplus (I_2 + I_1I_1) \oplus \cdots \,.
\]
As shown in \cite[Sect.~4]{zhuravlev1997} we get the following.
\begin{prop}\label{prop:derivations}
If $R$ is a set of non-associative polynomials of degree $n$ with associated verbal ideal $\lh \subset \lf$, then
\[
 \lh_{n+k} = U_k(\lh_n) 
\]
for all $k\geq 0$. In particular, we have $\lh_{n+1} = I_1(\lh_n)$ and $\lh_{n+2} = I_2(\lh_n) + I_1I_1(\lh_n)$.
\end{prop}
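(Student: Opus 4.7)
The plan is to derive Proposition~\ref{prop:derivations} from Zhuravlev's identity $\lh = \mathcal{U}(\Der(\lf))(R)$ recalled just above, by splitting the action of $\mathcal{U}(\Der(\lf))$ on $R$ into a grading-preserving and a grading-raising part.

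First, I would observe that $[I_i, I_j]\subset I_{i+j}$, so $I$ is an ideal in $\Der(\lf)$ and $\Der(\lf)= I\rtimes I_0$. By the Poincaré--Birkhoff--Witt theorem applied to an ordered basis that lists a basis of $I$ before a basis of $I_0$, multiplication induces a vector space isomorphism $\mathcal{U}(I)\otimes \mathcal{U}(I_0)\isom \mathcal{U}(\Der(\lf))$. Since $\mathcal{U}(I_0)$ acts in degree~$0$ and $U_k$ raises degree by exactly~$k$, the degree-$k$ homogeneous component of $\mathcal{U}(\Der(\lf))$ is identified under this isomorphism with $U_k\otimes \mathcal{U}(I_0)$.

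Second, I would verify that every derivation $\delta\in\Der(\lf)$ preserves $\lh$. For this, consider the one-parameter family of Lie algebra endomorphisms $\varphi_t\colon\lf\to\lf$ determined by $\varphi_t(x_i)=x_i+t\delta(x_i)$. A direct induction on bracket length shows that the coefficient of $t$ in $\varphi_t(y)$ is exactly $\delta(y)$ for every $y\in\lf$. Since $\lh$ is verbal, $\varphi_t(y)\in\lh$ for every $y\in\lh$ and every $t$, so the linear coefficient $\delta(y)$ also lies in $\lh$. Consequently $\mathcal{U}(\Der(\lf))$ stabilises $\lh$; in particular $\mathcal{U}(I_0)$ preserves each homogeneous piece $\lh_n$.

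Combining these two points, every element of $\lh_{n+k}$ arises as a sum of terms $\Delta_I(\Delta_0(r))$ with $\Delta_I\in U_k$, $\Delta_0\in\mathcal{U}(I_0)$, and $r\in R\subset\lh_n$. Since $\Delta_0(r)\in\lh_n$, this gives $\lh_{n+k}\subset U_k(\lh_n)$. The reverse inclusion is immediate as $U_k$ preserves $\lh$ and raises degree by~$k$. The specialisations $\lh_{n+1}=I_1(\lh_n)$ and $\lh_{n+2}=I_2(\lh_n)+I_1I_1(\lh_n)$ follow at once from the decompositions $U_1=I_1$ and $U_2=I_2+I_1I_1$ stated before the proposition.

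The main obstacle I anticipate is the bookkeeping in the PBW step: one must verify that commuting $I_0$-derivations past $I$-derivations inside $\mathcal{U}(\Der(\lf))$ does not alter the degree-$k$ component, so that degree-$k$ elements of $\mathcal{U}(\Der(\lf))$ are indeed spanned by products $\Delta_I\Delta_0$ with $\Delta_I\in U_k$. Once this semidirect product structure is cleanly recorded, the rest of the argument is essentially formal.
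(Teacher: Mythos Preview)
Your argument is correct. The paper itself does not give a proof of this proposition at all: it simply records the statement and attributes it to \cite[Sect.~4]{zhuravlev1997}, having already quoted Zhuravlev's main identity $\lh=\mathcal{U}(\Der(\lf))(R)$ immediately before. So there is nothing to compare on the level of technique---you are supplying a derivation where the paper only gives a citation.

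That said, your derivation is exactly the kind of argument one would extract from Zhuravlev's setup, and each step checks out. The semidirect product decomposition $\Der(\lf)=I_0\ltimes I$ together with PBW does give a grading on $\mathcal{U}(\Der(\lf))$ whose degree-$k$ piece is $U_k\cdot\mathcal{U}(I_0)$; the commutator relation $[I_0,I_n]\subset I_n$ ensures that reordering monomials never changes the degree, which disposes of the obstacle you flag. Your verification that derivations preserve $\lh$ via the one-parameter family $\varphi_t$ is clean (note that $\varphi_t$ really is an endomorphism for every $t$ by the universal property of $\lf$, and $\varphi_t(y)$ is polynomial in $t$ for homogeneous $y$, so all its coefficients lie in the linear subspace $\lh$). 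One small cosmetic point: the paper's displayed decomposition $\mathcal{U}(I)=\bigoplus_{n\ge1}U_n$ omits $U_0=\IC\cdot 1$, which you implicitly need for the $k=0$ case; this is harmless since $\lh_n=\lh_n$ is trivial anyway.
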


\begin{exam}\label{exam:derivations}
	Let $\lf$ be a free Lie algebra on two generators $x,y$ and let $V=\langle x,y\rangle$. We  compute the verbal ideal $\lh$ generated by $V_{(4,1)}\subset \lf_5$.
	
	The element $xxxxy$ is a highest weight vector for $V_{(4,1)}$. As we later see in Table~\ref{tab: dimensions}, we have $\dim V_{(4,1)}=4$. Therefore, a basis of $V_{(4,1)}$ is given by
	\[
	v_1 = xxxxy,\, v_2 =xxyxy+xyxxy+yxxxy,\, v_3=yyxxy+yxyxy+xyyxy, \,v_4 = yyyxy.
	\]
	By the previous corollary we have $\lh_6 = U_1(V_{(4,1)}) = I_1(V_{(4,1)})$. Since $\lf$ is generated by $x$ and $y$, we have $I_1= \langle \ad x, \ad y\rangle$ and therefore $\lh_6$ is generated by the vectors $ [x,v_1], [y,v_1],\dots,[x,v_4], [y,v_4]\in \lf_6$. One can easily check that these vectors are linearly independent and thus $\dim \lh_6 =8$. Comparing with the dimensions in Table~\ref{tab: dimensions} we see that $\lh_6 = V_{(5,1)}\oplus V_{(4,2)}$ is the only possibility.
	Similarly, one shows $\lh_7 =U_2(V_{(4,1)}) = \lf_7$. Therefore, the verbal ideal generated by $V_{(4,1)}$ is the ideal $V_{(4,1)} \oplus V_{(5,1)} \oplus V_{(4,2)} \oplus \lf_{\geq 7}\subset \lf$.
\end{exam}

\begin{exam}\label{exam: 7-step, 2gen}
	Consider the free 7-step nilpotent Lie algebra $\frn_{2,7}$ on two generators. Then the verbal ideal $\mathfrak{a}\subset \frn_{2,7}$ generated by $\lh_6 =V_{(5,1)}\oplus V_{(4,2)} \subset \lf_6$ is 
	\[
		\mathfrak{a} \isom V_{(5,1)}\oplus V_{(4,2)} \oplus V_{(6,1)} \oplus 2V_{(5,2)} \oplus V_{(4,3)}.
	\]
	Note that while $\lf_7$ contains two copies (and thus a 1-parameter family) of the representation  $V_{(4,3)}$, in the above decomposition we have to take the unique one that is contained in $I_1(\lh_6)$. 
In particular, we get just one Lie algebra and not a family as in Example~\ref{exam: smallest}.
\end{exam}

\begin{table}
	\caption{Non-abelian pseudo-free Lie algebras of nilpotency index $\nu \leq 5$, minimally generated by $m$ elements}
	\label{tab: nilpotency index 5}
\begin{tabular}{rrl}
			\toprule
			$\nu$& $m$ &Lie algebras 
			\quad  (in each row $\epsilon_i \in \{0,1\}$ but not all equal to $1$) 
			\\
			\midrule
			$2$& $\geq 2$& $\frn_{m,2}$ \\
			$3$& $\geq 2$ & $\frn_{m,3}$ \\
			\midrule
			\multirow{2}*{$4$} & $2$ & $\frn_{2,4}$\\
			 & $\geq 3$ &  $\frn_{m,4}$, $\frn_{m,4}/V_{(3,1)}$, $\frn_{m,4}/ V_{(2,1,1)}$\\
			\midrule
			& $2$ & $\frn_{2,5}$, $\frn_{2,5}/V_{(4,1)}$, $\frn_{2,5}/ V_{(3,2)}$\\
			$5$ & $3$ &$\frn_{3,5}/\left( \epsilon_1 V_{(4,1)} \oplus \epsilon_2 V_{(3,2)} \oplus \epsilon_3 V_{(3,1,1)} \oplus\epsilon_4 V_{(2,2,1)} \right) $ \\
			& $3$&$\frn_{3,5}/\left(V_{(2,1,1)} \oplus  V_{(3,2)} \oplus V_{(3,1,1)} \oplus V_{(2,2,1)}
			\right) $\\
			\midrule
			\multirow{2}*{$5$}& \multirow{2}*{$\geq 4$} &$\frn_{m,5}/\left( \epsilon_1 V_{(4,1)} \oplus \epsilon_2 V_{(3,2)} \oplus \epsilon_3 V_{(3,1,1)} \oplus\epsilon_4 V_{(2,2,1)} \oplus \epsilon_5 V_{(2,1,1,1)}\right) $ \\
			 & &$\frn_{m,5}/\left(V_{(2,1,1)} \oplus  V_{(3,2)} \oplus V_{(3,1,1)} \oplus V_{(2,2,1)} \oplus V_{(2,1,1,1)}
			\right) $\\
			\bottomrule
	\end{tabular}
\end{table}

\begin{prop}\label{prop: 5-step classification}
Let $\lg$ be a non-abelian pseudo-free Lie algebra of nilpotency index $\nu \leq 5$, minimally generated by $m$~elements. Then $\lg$ is isomorphic to exactly one of the Lie algebras in Table~\ref{tab: nilpotency index 5}.
\end{prop}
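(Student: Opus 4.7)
The plan is to enumerate verbal ideals $\lh \subsetneq \lf$ in the free Lie algebra $\lf$ of rank~$m$ such that $\lf/\lh$ is non-abelian and has nilpotency index exactly $\nu$; pairwise non-isomorphism of the resulting quotients is then automatic by Lemma~\ref{lem:pseudo-free isomorphic}. The cases $\nu \in \{2,3\}$ fall out of Corollary~\ref{cor:3step}. For $\nu = 4$, Proposition~\ref{prop:verbal}~\ref{it:verbal.3} reduces the problem to listing proper $\Gl(V)$-invariant subspaces of $\lf_4 = V_{(3,1)} \oplus V_{(2,1,1)}$, giving the three options $0$, $V_{(3,1)}$, $V_{(2,1,1)}$ (with $V_{(2,1,1)}=0$ when $m=2$, so only $\frn_{2,4}$ survives there).

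For $\nu = 5$ the verbal ideal is determined by a pair $(\lh_4, \lh_5)$ of $\Gl(V)$-invariant subspaces of $\lf_4$ and $\lf_5$, subject to the two constraints $\lh_5 \subsetneq \lf_5$ (to keep $\nu = 5$) and $I_1(\lh_4) \subseteq \lh_5$ (from Proposition~\ref{prop:derivations}, to ensure verbality). Excluding $\lh_4 = \lf_4$, which would force $\nu \leq 3$, one has $\lh_4 \in \{0, V_{(3,1)}, V_{(2,1,1)}\}$. In the case $\lh_4 = 0$, every proper $\Gl(V)$-invariant subspace of $\lf_5$ is admissible; using the multiplicity-free decomposition $\lf_5 = V_{(4,1)} \oplus V_{(3,2)} \oplus V_{(3,1,1)} \oplus V_{(2,2,1)} \oplus V_{(2,1,1,1)}$ from Proposition~\ref{prop:mult}, these are parametrised by $(\epsilon_1,\ldots,\epsilon_5) \in \{0,1\}^5 \setminus \{(1,\ldots,1)\}$, recovering the first row of Table~\ref{tab: nilpotency index 5} with the natural truncation when some $V_\lambda$ vanish for small $m$.

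The remaining step is to compute $I_1(V_{(3,1)})$ and $I_1(V_{(2,1,1)})$ inside $\lf_5$, which I would approach by the method of Example~\ref{exam:derivations}: pick highest weight vectors of the two source representations, apply the generators of $I_1$ (the adjoints $\ad x_i$, together with, for $m \geq 3$, the non-inner degree-one derivations sending one generator to a bracket $[x_j, x_k]$), and identify the resulting $\Gl(V)$-invariant subspace by weight-multiplicity comparison against the irreducible components of $\lf_5$. The expected outcomes are $I_1(V_{(3,1)}) = \lf_5$ (so $\lh_4 = V_{(3,1)}$ forces $\lh_5 = \lf_5$ and contributes no Lie algebra of nilpotency index exactly $5$) and $I_1(V_{(2,1,1)}) = V_{(3,2)} \oplus V_{(3,1,1)} \oplus V_{(2,2,1)} \oplus V_{(2,1,1,1)}$, whose only $\Gl(V)$-invariant proper superspace is itself, producing the unique entry of the second row of the table. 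The hard part will be controlling these non-Pieri contributions: the naive $\ad \lf_1$-action only reaches the direct summands predicted by Pieri's rule ($V_{(4,1)} \oplus V_{(3,2)} \oplus V_{(3,1,1)}$ from $V_{(3,1)}$, and $V_{(3,1,1)} \oplus V_{(2,2,1)} \oplus V_{(2,1,1,1)}$ from $V_{(2,1,1)}$), so the remaining irreducibles $V_{(2,2,1)}$, $V_{(2,1,1,1)}$ (and in the $V_{(3,1)}$ case also the \emph{a priori} unexpected $V_{(4,1)}$-completion for general $m$) must be realised through the honest $I_1$-action, which requires careful bookkeeping of weight multiplicities and the verification that the constructed derivations give non-zero projections onto each missing Schur component.
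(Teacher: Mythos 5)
Your proposal is correct and takes essentially the same route as the paper: reduce to enumerating verbal ideals via Lemma~\ref{lem: pseudo-free}, settle $\nu\le3$ by Corollary~\ref{cor:3step} and the cases $\lh\subset\lf_{\ge\nu}$ by Proposition~\ref{prop:verbal} together with the multiplicity-free decompositions of Proposition~\ref{prop:mult}, handle $\nu=5$ with $\lh_4\ne0$ by computing $I_1(V_{(3,1)})=\lf_5$ and $I_1(V_{(2,1,1)})$ as in Example~\ref{exam:derivations} (with necessity and sufficiency of $I_1(\lh_4)\subset\lh_5$ coming from Proposition~\ref{prop:derivations}), and get pairwise non-isomorphism from Lemma~\ref{lem:pseudo-free isomorphic}. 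The only point the paper adds that your sketch would still need is the one-line reduction making the $I_1$ computations uniform in $m$ (the highest weight vectors of the constituents of $\lf_5$ involve at most four generators, so checking $m\le4$ suffices), and as a minor correction to your Pieri bookkeeping, the genuinely non-Pieri constituent to be produced in $I_1(V_{(2,1,1)})$ is $V_{(3,2)}$, while $V_{(4,1)}$ in the $V_{(3,1)}$ case is already Pieri-predicted.
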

\begin{proof}
Let us write $\lg=\lf/\lh$ for some verbal ideal $\lh$ in the free Lie algebra $\lf$ on $m$~generators.
Since $\lg$ is $\nu$-step nilpotent, we have $\lf_{\ge\nu+1}\subset\lh$.
 By Corollary~\ref{cor: tuple}, there exist only finitely many choices for $\lh$. If $\lh\subset\lf_{\ge\nu}$, then Proposition~\ref{prop:verbal} implies that $\lh_\nu$ is a direct sum of (possibly none, but not all) irreducible $\Gl(V)$-subrepresentations of $\lf_\nu$.
 
It remains to treat the case where $\nu=5$ and $\lh_4\neq 0$. This can only happen for $m\ge3$,
since $\lf_4$ is irreducible for $m=2$ and we would thus have $\lh_5=\lf_5$ otherwise.
For $m\ge3$, the two choices for $\lh_4$ are $\lh_4 = V_{(3,1)}$ and $\lh_4 = V_{(2,1,1)}$.

We start with the case $\lh_4 = V_{(3,1)}$ and claim that $\lh_5 = I_1(V_{(3,1)}) = \lf_5$. Since the highest weight vectors of every irreducible subrepresentation of $\lf_5$ involve at most $4$~generators, it is enough to show this for $m\leq 4$. This can  be done as in Example~\ref{exam:derivations}.

By the same argument as above it suffices to treat the case $\lh_4 = V_{(2,1,1)}$ with $ m \leq 4$. Computing $I_1(V_{(2,1,1)})$ yields $\lh_5 = V_{(3,2)} \oplus V_{(3,1,1)} \oplus V_{(2,2,1)}$ for $m=3$ and $\lh_5 = V_{(3,2)} \oplus V_{(3,1,1)} \oplus V_{(2,2,1)} \oplus V_{(2,1,1,1)}$ for $m \geq 4$. 
\end{proof}

\subsection{Classification in low dimensions}\label{sect: class}
In this section we classify pseudo-free Lie algebras up to dimension $20$, which is the smallest dimension where infinitely many non-isomorphic pseudo-free Lie algebras occur. 
To control the dimensions of the pseudo-free Lie algebras we find, we make use of two important dimension formulas:
\begin{enumerate}
 \item The dimensions of the irreducible representations $V_\lambda$ can be computed with the Weyl dimension formula \cite[Section~3.3.5]{bahturin2021identical}: for an $m$-dimensional vector space $V$ and a partition $\lambda = (\lambda_1, \dots, \lambda_m)$ we have
\[
\dim V_\lambda = \prod_{1\leq i<j \leq m} \frac{\lambda_i-\lambda_j +j-i}{j-i}.
\]
\item For a free Lie algebra $\lf$ on $m$ generators the dimension of its degree~$n$ component $\lf_n$ can be computed with Witt's dimension formula \cite[Theorem~3.2]{bahturin2021identical} and we get
\[
\dim \frn_{m,\nu} = \sum_{n\leq \nu} \dim \lf_n = \sum_{n\leq \nu}\frac{1}{n}\sum_{d\mid n}\mu(d)m^{n/d}.
\]
\end{enumerate}
The  dimensions relevant for our discussion are listed in Table~\ref{tab: dimensions}.
\begin{table}
	\caption{Dimension data for $\Gl(m,\IC)$-representations $V_\lambda$ and free nilpotent Lie algebras}
	\label{tab: dimensions}
	\subfloat{	
		\begin{tabular}{l rrrr}
			\toprule
			& \multicolumn{4}{c}{$m$} \\
			\cmidrule{2-5}
			Partition $\lambda$ & \phantom{00}2 & \phantom{00}3 & \phantom{00}4 & \phantom{00}5 \\
			\midrule
			$(3,1)$ & 3 & 15 & 45 & 105 \\
			$(2,1,1)$ & 0 & 3 & 15 & 45 \\
			\midrule
			$(4,1)$ & 4 & 24 & 84 & 224	\\
			$(3,2)$ & 2 & 15 & 60 & 175 \\
			$(3,1,1)$ & 0 & 6 & 36 & 126 \\
			$(2,2,1)$ & 0 & 3 & 20 & 75	\\
			$(2,1,1,1)$ & 0 & 0 & 4 & 24 \\ 
			\midrule
			$(5,1)$ & 5 & 35 & 140 & 420 \\
			$(4,2)$ & 3 & 27 & 126 & 420 \\
			$(3,3)$ & 1 & 10 & 50 & 175 \\ 
			$(4,1,1)$ & 0 & 10 & 70 & 280 \\
			$(3,2,1)$ & 0 & 8 & 64 & 280\\ 
			\midrule
			$(6,1)$ & 6 & 48 & 216 & 720 \\
			$(5,2)$ & 4 & 42 & 224 & 840 \\
			$(4,3)$ & 2 & 24 & 140 & 560 \\
			\bottomrule
		\end{tabular}
	}
	\hfill
	\subfloat{
		\begin{tabular}{l rrrr}
			\toprule
			& \multicolumn{4}{c}{$m$}\\
			\cmidrule{2-5}
			Lie algebra & \phantom{000}2 & \phantom{000}3 & \phantom{000}4 & \phantom{000}5 \\
			\midrule
			$\frn_{m,2}$ & $3$ & $6$ & $10$ & $15$ \\
			$\frn_{m,3}$ & $5$ & $14$ & $30$ & $55$ \\
			$\frn_{m,4}$ & $8$ & $32$ & $90$ & $205$ \\
			$\frn_{m,5}$ & $14$ & $80$ & $294$ & $829$ \\
			$\frn_{m,6}$ & $23$ & $196$ & $964$ & \\
			$\frn_{m,7}$ & $41$ & $508$ & $3304$ & \\
			$\frn_{m,8}$ & $71$ & $1318$ & & \\
			\bottomrule
	\end{tabular}}
\end{table}

\begin{thm}\label{thm: class}
Every non-abelian pseudo-free Lie algebra of dimension at most~$20$ is isomorphic to one (and only one) of the Lie algebras listed in Table~\ref{tab: small dim}.
In particular, $20$ is the smallest dimension where infinitely many non-isomorphic pseudo-free Lie algebras exist.
\end{thm}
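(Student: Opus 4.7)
The plan is to exhaustively enumerate verbal ideals $\lh$ in free Lie algebras $\lf$ on $m$ generators for which the quotient $\lg = \lf/\lh$ satisfies $\dim \lg \leq 20$, organised by the pair $(m, \nu)$ where $\nu$ is the nilpotency index. Writing $\lg = \lf/\lh$ (Lemma~\ref{lem: pseudo-free}), I would first bound $m$: since $\lg$ is non-abelian and $\lf_2 = V_{(1,1)}$ is irreducible, $\lh \cap \lf_2 = 0$, forcing $\dim \lg \geq m + \binom{m}{2}$, whence $m \leq 5$. Applying the same argument one level up using the irreducibility of $\lf_3 = V_{(2,1)}$ (of dimension $(m^3-m)/3$) forces $\nu = 2$ for $m \in \{4,5\}$, yielding $\frn_{4,2}$ and $\frn_{5,2}$. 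For $m = 3$, Proposition~\ref{prop: 5-step classification} combined with the dimension tables singles out $\frn_{3,2}$, $\frn_{3,3}$ and $\frn_{3,4}/V_{(3,1)}$; the cases $\nu \geq 5$ are ruled out by tracking the irreducible summands of $\lf_4$ and $\lf_5$ that are forced to survive once one requires $\lh_4 \neq \lf_4$ together with the derivation constraints.

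The heart of the argument is the case $m = 2$. For $\nu \leq 5$ I would simply read the classification off Proposition~\ref{prop: 5-step classification}. For $\nu = 6$, each $\lf_k$ is multiplicity-free by Proposition~\ref{prop:mult}, so any verbal ideal $\lh$ is the direct sum of its intersections $\lh_k$ with the $\lf_k$, subject only to the derivation constraints $I_1(\lh_k) \subseteq \lh_{k+1}$ from Proposition~\ref{prop:derivations}. Enumerating $\lh_5 \in \{0, V_{(4,1)}, V_{(3,2)}\}$ and computing the induced $\lh_6$ as in Example~\ref{exam:derivations} (using $I_1(V_{(4,1)}) = V_{(5,1)} \oplus V_{(4,2)}$ and an analogous computation of $I_1(V_{(3,2)}) = V_{(4,2)} \oplus V_{(3,3)}$) recovers exactly the seven $\nu = 6$ entries of Table~\ref{tab: small dim}. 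For $\nu = 7$ the same method singles out Example~\ref{exam: 7-step, 2gen} at dimension $17$ and the $\IP^1$-family of Example~\ref{exam: smallest} at dimension $20$ as the only verbal quotients of $\frn_{2,7}$ with $\dim \lg \leq 20$. Non-isomorphism throughout is immediate from Lemma~\ref{lem:pseudo-free isomorphic}.

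The second assertion---that $20$ is the smallest dimension admitting an infinite family---follows from Corollary~\ref{cor: tuple}: outside the pairs with $\nu \leq 5$ or $(m, \nu) = (2, 6)$ the only remaining possibilities are $(2, \geq 7)$ and $(m, \geq 6)$ with $m \geq 3$. The latter are ruled out by the dimensions $\dim \frn_{m,6} \geq 196$, whose verbal quotients cannot fit below $21$ once one keeps $\lh_3 = 0$ (needed for $\nu \geq 4$). In the $(2, 7)$ regime, Example~\ref{exam: smallest} exhibits the $20$-dimensional family, while a direct dimension count based on the forced containment $\lh_7 \supseteq I_1(\lh_6) + I_2(\lh_5) + I_1I_1(\lh_5)$ rules out any infinite family at smaller dimension. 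The main obstacle is the explicit derivation computation for $m = 2$ with $\nu \in \{6, 7\}$: determining $I_1(V_\lambda)$ for each relevant $\lambda$ via Proposition~\ref{prop:derivations} and verifying that only the listed ideals realise $\dim \lg \leq 20$ is delicate bookkeeping, but entirely algorithmic once the representation-theoretic framework is in place.
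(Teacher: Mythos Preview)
Your plan follows the paper's proof closely: bound $m \le 5$, then case-split on $(m,\nu)$, using Proposition~\ref{prop:derivations} to propagate the graded pieces of a verbal ideal upward. There is, however, one genuine omission. For $m=2$ you stop at $\nu=7$ and never explain why $\nu\ge 8$ cannot occur. The paper closes this case as follows: if $\nu\ge 8$ and $\dim\lg\le 20$, then $\lg/\lg_{\ge 8}$ is a $7$-step pseudo-free Lie algebra of dimension strictly below $20$, hence must be the $17$-dimensional algebra $\frn_{2,7}/\gotha$ of Example~\ref{exam: 7-step, 2gen}; but that algebra has $\lh_6=V_{(5,1)}\oplus V_{(4,2)}$, and one computes $U_2(V_{(5,1)}\oplus V_{(4,2)})=\lf_8$, forcing $\lh_8=\lf_8$ and contradicting $\nu\ge 8$. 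Without this step your enumeration is incomplete.

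A smaller point: your third paragraph argues the second assertion separately, but once the classification is established it is immediate from Table~\ref{tab: small dim}. The specific bound you invoke there is also insufficient as stated: for $m=3$, keeping $\lh_3=0$ yields only $\dim\lg\ge\dim(\lf_1\oplus\lf_2\oplus\lf_3)=14$, not $\ge 21$. What actually rules out $m\ge 3$, $\nu\ge 6$ is the $\nu=5$ classification you already cite: by Proposition~\ref{prop: 5-step classification} and Table~\ref{tab: dimensions}, every $5$-step pseudo-free Lie algebra on $m\ge 3$ generators has dimension at least $35$, and $\lg/\lg_{\ge 6}$ is one of these whenever $\nu\ge 6$.
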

\begin{proof}
	By Proposition~\ref{prop:verbal} every pseudo-free nilpotent Lie algebra is obtained as the quotient of a freely nilpotent Lie algebra $\frn_{m,\nu}$ by a direct sum of the irreducible representations $V_\lambda$. Computing the dimension of the Lie algebras $\frn_{m,\nu}$ using Witt's formula yields $\dim \frn_{m,2} > 20$ for $m\geq 6$. By Corollary~\ref{cor:3step} every non-abelian pseudo-free Lie algebra with at least $6$ generators has dimension at least $21$. Hence, we only have to consider quotients of the Lie algebras $\frn_{m,\nu}$ for $m\leq 5$. We treat three cases.
\begin{prooflist}
\item[Case~1] For $4\leq m \leq 5$ we get $\dim \frn_{4,2} = 10$, $\dim \frn_{5,2}= 15$ and $\dim \frn_{m,\nu} > 20$ for $\nu \geq 3$. Therefore, by Corollary~\ref{cor:3step} the only pseudo-free Lie algebras with at least $4$ generators and of dimension at most $20$ are $\frn_{4,2}$ and $\frn_{5,2}$.
	
\item[Case~2] For $m=3$ we have $\dim \frn_{3,2} = 6$, $\dim \frn_{3,3} = 14$ and $\dim \frn_{3,4} = 32$. By Proposition~\ref{prop:mult} the degree~$4$ component of $\frn_{3,4}$ decomposes as $V_{(3,1)}\oplus V_{(2,1,1)}$. From Table~\ref{tab: dimensions} we get $\dim \frn_{3,4}/ V_{(3,1)} = 17$ and $\dim \frn_{3,4}/V_{(2,1,1)}=29$. 
Using the classification of pseudo-free Lie algebras of nilpotency index $\nu \leq5$ from Proposition~\ref{prop: 5-step classification} we see that any other pseudo-free Lie algebra with $m=3$ has larger dimension.
\item[Case~3] It remains to classify pseudo-free Lie algebras with $m=2$. By Corollary~\ref{cor: tuple} there are finitely many of these Lie algebras with nilpotency index at most~$6$. In order to determine the quotients of $\frn_{2,6}$ which are pseudo-free we compute
$U_1(V_{(3,1)}) = \lf_5$, $U_1(V_{(4,1)}) = V_{(5,1)} \oplus V_{(4,2)}$, and $U_1(V_{(3,2)}) = V_{(4,2)} \oplus V_{(3,3)}$.	 

	Therefore, by Proposition~\ref{prop:derivations}, Table~\ref{tab: 2 gen} contains all the possible pseudo-free Lie algebras with two generators of nilpotency index at most~$6$. In particular, the only Lie algebras of dimension larger than $20$ are $\frn_{2,6}$ and $\frn_{2,6}/V_{(3,3)}$. Next, we cover Lie algebras with $\nu =7$. Again we compute
	\begin{align*}
		U_2(V_{(4,1)}) &= \lf_7,\\
		U_2(V_{(3,2)}) &= 2V_{(5,2)} \oplus 2V_{(4,3)},\\
		U_1(V_{(5,1)}\oplus V_{(4,2)}) &= V_{(6,1)} \oplus 2 V_{(5,2)} \oplus V_{(4,3)} ,\\
		U_1(V_{(5,1)}\oplus V_{(3,3)}) &= V_{(6,1)} \oplus V_{(5,2)} \oplus V_{(4,3)},\\
		U_1(V_{(4,2)}\oplus V_{(3,3)}) &= V_{(5,2)} \oplus 2V_{(4,3)}.
	\end{align*}
	Since $\dim \frn_{2,7} = 41$, the only 7-step pseudo-free Lie algebras relevant to our classification are the 17-dimensional Lie algebra $\frn_{2,7}/\gotha$ from Example~\ref{exam: 7-step, 2gen} and the Lie algebras $\lg_\mu$ from Example~\ref{exam: smallest}. Finally, suppose $\lg$ is a pseudo-free Lie algebra with $\nu \geq 8$ and of dimension at most~$20$. Then
	$
	\lg/ \lg_{\geq8} \cong \frn_{2,7}/\gotha
	$.
	However, we also have $U_2(V_{(5,1)} \oplus V_{(4,2)}) = \lf_8$ and thus $\lg$ has nilpotency index 7. \qedhere
\end{prooflist}
\end{proof}

\begin{table}
	\caption{Non-abelian pseudo-free Lie algebras of nilpotency index $\nu \leq 6$, minimally generated by $m=2$ elements}
	\label{tab: 2 gen}
	\begin{tabular}{rlr}
		\toprule
		$\nu$& Lie algebra & \llap{Dimension} \\
		\midrule
		$2$ & $\frn_{2,2}$ & $3$\\
		$3$ & $\frn_{2,3}$ & $5$\\
		$4$ & $\frn_{2,4}$ & $8$\\
		\midrule
		$5$ &    $\frn_{2,5}$   & $14$\\
		    &    $\frn_{2,5}/V_{(4,1)}$ & $10$\\
		    &    $\frn_{2,5}/V_{(3,2)}$ & $12$\\
		\midrule
		$6$ &   $\frn_{2,6} $ & $23$\\
		    &   $\frn_{2,6}/V_{(5,1)} $ & $18$\\
		    &   $\frn_{2,6}/ V_{(4,2)} $ & $20$\\
		    &   $\frn_{2,6}/V_{(3,3)}$ & $22$\\
		    &   $\frn_{2,6}/\left(V_{(4,2)} \oplus V_{(3,3)}\right)$ & $19$\\
		    &   $\frn_{2,6}/\left(V_{(5,1)}  \oplus V_{(3,3)}\right)$ & $ 17$\\
		    &   $\frn_{2,6}/\left(V_{(5,1)} \oplus V_{(4,2)} \right)$ & $15$\\
		    &   $\frn_{2,6}/\left(V_{(4,1)}\oplus V_{(5,1)} \oplus V_{(4,2)} \right)$ & $11$\\
		    &   $\frn_{2,6}/\left(V_{(3,2)}\oplus V_{(4,2)} \oplus V_{(3,3)}\right)$ & $17$\\
		\bottomrule
	\end{tabular}
\end{table}

\subsection{Proof of Theorem~\ref{thm: pseudofree classification}}
Collecting the results from this section, we can now prove our classification theorem for pseudo-free Lie algebras: the first item is Corollary~\ref{cor:3step}, the second item is Proposition~\ref{prop: 5-step classification}, and the third item is Theorem~\ref{thm: class}. \qed

\section{Deformations of complex parallelisable nilmanifolds}\label{sect: geometry}

In this section, we show how the previous results can be applied to study deformations of complex parallelisable nilmanifolds.

\subsection{Complex parallelisable nilmanifolds}

Let $G$ be a simply connected complex nilpotent Lie group with Lie algebra $\lg$.
By a criterion of Mal'cev \cite{malcev51} the Lie group $G$ admits a lattice $\Gamma \subset G$, i.\,e., a discrete cocompact subgroup, if and only if the real Lie algebra underlying $\lg$ can be defined over $\IQ$.
Since the multiplication in $G$ is holomorphic, the quotient $X:=\Gamma\backslash G$ by the left action of $\Gamma$ is a compact complex parallelisable nilmanifold.

It is sometimes useful to consider the following equivalent definition. Let $H$ be a simply connected real nilpotent Lie group $H$ admitting a lattice $\Gamma$ and with Lie algebra $\lh$. Then the quotient $M = \Gamma \backslash H$ is a compact real nilmanifold. Given an almost complex structure $J\colon \lh \to \lh$ we have a decomposition of the complexified Lie algebra $\lh_\IC = \lh^{1,0} \oplus \lh^{0,1}$, where $\lh^{1,0}$ is the $i$-eigenspace of the $\IC$-linear extension of $J$ and $\lh^{0,1}=\overline{\lh^{1,0}}$ is the $(-i)$-eigenspace. Extending this almost complex structure by left-translations we get an almost complex structure on $H$ descending to $M$.
This almost complex structure is integrable if and only if the Lie bracket is $J$-linear or equivalently if $[\lh^{1,0},\lh^{0,1}]=0$.
If integrability holds, $(H,J)$ is a complex Lie group and $(M,J)$ is a complex parallelisable nilmanifold.
Moreover, the canonical projection
\[\pi\colon(\lh,J)\to \lh^{1,0}, \qquad z\mapsto \tfrac12(z-iJz)\]
is an isomorphism of complex Lie algebras. Therefore,
defining the Lie bracket on $\lgbar$ by $[\overline x, \overline y]=\overline{[x,y]}$,
we have an identification
\[ \lg_\IC=\lh_\IC=\lg\oplus \lgbar, \]
where $[\lg, \lgbar]=[\lh^{1,0}, \lh^{0,1}]=0$.

\subsection{Deformation theory}

In general the deformation theory of a compact complex manifold $X$ is governed by the Kodaira--Spencer DGLA $(\KS_X, \delbar, [\cdot, \cdot])$. Here $\KS_X =\bigoplus_q \KS_X^q$, where $\KS_X^q$ is the vector space of $(0,q)$-forms with values in the holomorphic tangent bundle and $[\cdot, \cdot]$ is the Schouten bracket (see \cite[Section~8.3]{manetti2022lie} for details).  In the case of complex parallelisable nilmanifolds we can restrict our considerations to a finite-dimensional DGLA \cite{rollenske2011kuranishi}, which we describe in the following.

Let $X= \Gamma \backslash G$ be a complex parallelisable nilmanifold with Lie algebra $\lg$. Then we can identify elements in $\Wedge^q\lgbar^*$ with left-invariant differential forms of type $(0,q)$ on $X$ and the restriction of the usual differential $d=\del+\delbar$ to $\Wedge^q\lgbar^*$ can be expressed in terms of the Lie bracket on $\lg$ as follows: let $\overline\alpha \in \lgbar^*$ be a left-invariant $(0,1)$-form and let $x,y\in \lg_\IC$ be left-invariant vector fields on $X$. Then we have 
\[
	d\overline\alpha(x,y)=x(\overline\alpha(y))-y(\overline\alpha(x))-\overline\alpha([x,y])=-\overline\alpha([x,y]).
\]
Since $[\lg, \lgbar]=0$, we get $\delbar \overline\alpha = d\overline\alpha$ and every element of $\lg$ gives rise to a holomorphic vector field on $X$. The cohomology of the tangent sheaf $\Theta_X$ can now be computed via the complex
\[
0\to \lg \overset{0}{\to} \lgbar^*\tensor \lg\overset{\delbar}{\to}\Wedge^{2} \lgbar^*\tensor \lg\overset{\delbar}{\to}\dots
\]
obtained from $\left(\Wedge^\bullet\lgbar^*,\delbar\right)$ by tensoring with $\lg$.
In particular, we have
\[
H^1(X,\Theta_X) = {\Ann([\lgbar,\lgbar])}\tensor \lg.
\]
 Restricting the usual Schouten bracket on $X$ to left-invariant forms we get
\begin{equation}\label{eq:schouten}
[\overline\alpha \otimes x, \overline\beta \otimes y] = (\overline\alpha \wedge \overline\beta) \otimes [x,y]
\end{equation}
for $\overline\alpha \otimes x\in \Wedge^{p}\lgbar^*\otimes \lg$ and $\overline\beta \otimes y\in \Wedge^q\lgbar^*\otimes \lg$. This makes $(\Wedge^\bullet\lgbar^* \otimes \lg, \delbar, [\cdot, \cdot])$ into a DGLA.
By \cite{rollenske2011kuranishi}, we have the following:

\begin{prop}
	Let $X= \Gamma\backslash G$ be a complex parallelisable nilmanifold with Lie algebra $\lg$. Then the inclusion of $(\Wedge^\bullet\lgbar^* \otimes \lg, \delbar, [\cdot, \cdot])$ into the Kodaira--Spencer DGLA is a quasi-isomorphism. In particular, sufficiently small deformations of $X$ carry a left-invariant complex structure.
\end{prop}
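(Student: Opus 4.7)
The plan is to reduce the proposition to the classical statement that the Dolbeault cohomology of a complex parallelisable nilmanifold is already computed by left-invariant antiholomorphic forms, and then to upgrade this quasi-isomorphism of complexes to one of DGLAs and deduce the geometric consequence via general deformation theory.

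Since $X$ is complex parallelisable, the holomorphic tangent bundle is trivialised by the left-invariant vector fields of $\lg$, yielding a holomorphic isomorphism $\Theta_X \cong \mathcal{O}_X \otimes_\IC \lg$. The Dolbeault complex computing $\KS_X$ therefore takes the form $\mathcal{A}^{0,\bullet}(X) \otimes \lg$ with differential $\delbar \otimes \id$, and the inclusion in question becomes
\[ \Wedge^\bullet \lgbar^* \otimes \lg \hookrightarrow \mathcal{A}^{0,\bullet}(X) \otimes \lg. \]
As $\lg$ is a finite-dimensional space of constant coefficients, the problem reduces to showing that the scalar inclusion $(\Wedge^\bullet \lgbar^*, \delbar) \hookrightarrow (\mathcal{A}^{0,\bullet}(X), \delbar)$ is a quasi-isomorphism.

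This scalar statement is the main technical obstacle and constitutes the actual content of the proposition; it can fail for more general nilmanifolds with left-invariant complex structure. For complex parallelisable nilmanifolds, however, it is a theorem of Sakane. One can reprove it by induction on the nilpotency step: the descending central series of $G$ exhibits $X$ as a tower of principal holomorphic torus bundles, and at each stage one compares the full and the invariant complex via the Borel spectral sequence, using the trivial base case that the Dolbeault cohomology of a complex torus is generated by invariant forms.

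To upgrade this to a DGLA quasi-isomorphism it suffices to observe that the Schouten bracket of two left-invariant forms is itself left-invariant and reduces to the formula~\eqref{eq:schouten}, so the inclusion is a map of DGLAs. The geometric assertion then follows from the homotopy invariance of the Maurer--Cartan functor (Goldman--Millson): a DGLA quasi-isomorphism induces an equivalence of deformation functors, so every sufficiently small Maurer--Cartan element of the Kodaira--Spencer DGLA is gauge equivalent to one in the finite-dimensional subcomplex $\Wedge^\bullet \lgbar^* \otimes \lg$. Such an element is left-invariant by construction and therefore produces a left-invariant deformation of the complex structure on $X$.
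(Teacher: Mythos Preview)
The paper does not actually prove this proposition: it is stated immediately after the sentence ``By \cite{rollenske2011kuranishi}, we have the following'' and is simply quoted from that reference without argument. So there is no proof in the paper to compare against; your sketch supplies what the paper outsources.

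That said, your outline is the standard and correct route, and it is essentially the argument carried out in the cited reference. The reduction to the scalar complex via the holomorphic triviality $\Theta_X\cong\mathcal O_X\otimes\lg$ is exactly right; the scalar quasi-isomorphism $(\Wedge^\bullet\lgbar^*,\delbar)\hookrightarrow(\mathcal A^{0,\bullet}(X),\delbar)$ is indeed Sakane's theorem for complex parallelisable nilmanifolds (and the inductive torus-bundle/Borel spectral sequence argument you indicate is how one proves it); the compatibility with the Schouten bracket is immediate from formula~\eqref{eq:schouten}; and invoking Goldman--Millson to pass from a DGLA quasi-isomorphism to an equivalence of deformation functors is the clean way to get the left-invariance of small deformations. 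Nothing is missing.
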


\begin{rem}
	Under further assumptions a similar result holds true for general nilmanifolds with left-invariant complex structure  \cite{rollenske09}.
\end{rem}

Kuranishi proved in \cite{kuranishi62} that for every compact complex manifold there exists a versal family of deformations over a complex space known as the Kuranishi space. After choosing a hermitian metric, this space can be explicitly constructed in the space of vector-valued harmonic $(0,1)$-forms.
We briefly recall his construction for complex parallelisable nilmanifolds. A detailed exposition is given in \cite{rollenske2011kuranishi}.

The left-invariant complex structure on $X$ is uniquely determined by the eigenspace decomposition $\lg_\IC=\lg\oplus \lgbar$.
Every sufficiently small deformation of this complex structure can be described by a map $\Phi\colon\lgbar \to \lg$ such that the graph $(\id+\Phi) \lgbar$ of $\Phi$ in $\lg_\IC$ is the new space of $(0,1)$-vectors,
which uniquely determines an almost complex structure on $X$.
The new almost complex structure is integrable if and only if $(\id+\Phi) \lgbar$ is closed under the Lie bracket.
If we consider $\Phi$ as an element of $\lgbar^* \otimes \lg$, this condition can be equivalently described by the Maurer--Cartan equation
\begin{equation}\label{eq: MC}
	\delbar \Phi +\tfrac12[\Phi, \Phi]=0.
\end{equation}
The space $H^1(X,\Theta_X)$ parametrises infinitesimal deformations, which correspond to first-order solutions of the Maurer--Cartan equation \eqref{eq: MC}. The obstruction to integrating an infinitesimal deformation is given as follows:
we consider the power series expansion
\begin{equation}\label{eq: series}
\Phi(t) = \sum_{k\geq 1}\Phi_k(t)
\end{equation}
of $\Phi$ around $0$, where $n=\dim H^1(X,\Theta_X)$ and $t=(t_1,\dots,t_n) \in \IC^n$.
Then each $\Phi_k$ is a homogeneous polynomial of degree~$k$ in the variables~$t_i$ and \eqref{eq: MC} is equivalent to the system of equations
\begin{equation}\label{eq:Phi_k}
 \delbar\Phi_k(t) = -\frac{1}{2}\sum_{0<i<k}[\Phi_i(t), \Phi_{k-i}(t)], \qquad k\geq 1.
\end{equation}
For $k=1$ this amounts to $\Phi_1(t)$ being $\delbar$-closed. One says that $X$ has \emph{unobstructed} deformations if there are representatives $\mu_1,\dots,\mu_n$ for a basis of $H^1(X,\Theta_X)$ such that the system of equations \eqref{eq:Phi_k} has a solution for every choice of parameters $t_1,\dots,t_n$ defining $\Phi_1(t) = \sum_{i=1}^nt_i\mu_i$. For some examples of complex parallelisable nilmanifolds, solutions to \eqref{eq:Phi_k} were explicitly computed in \cite{rollenske2011kuranishi} and \cite{popovici2022deformations}.

After choosing a left-invariant hermitian metric on $X$ one can single out a distinguished left-invariant solution by recursively defining
\begin{equation}\label{eq: Green}
	\Phi_k(t) = -\delbar^*G\left(\frac{1}{2}\sum_{0<i<k}[\Phi_i(t), \Phi_{k-i}(t)]\right).
\end{equation}
Here $G$ denotes the Green operator, which is inverse to the Laplacian restricted to the orthogonal complement of the space of harmonic forms. Choosing a basis of harmonic representatives $\mu_1, \dots, \mu_n$ one defines the so-called obstruction map for $\mu = \sum_{i=1}^n t_i\mu_i$ as $\obs(\mu) = H[\Phi(t), \Phi(t)]$, where $\Phi(t)$ is constructed as in \eqref{eq: Green} and $H$ is the projection to the space of harmonic $(0,2)$-forms with values in the holomorphic tangent bundle.
Kuranishi's theorem can now be stated as follows.

\begin{theo}[Kuranishi]
	There is a versal family of deformations of $X$ over the Kuranishi space 
	\[\Kur(X):=\{\mu\in \kh^1(\Theta_X)\mid \|\mu\|<\epsilon; \obs(\mu)=0\}\]
	for sufficiently small $\epsilon>0$, where $\kh^1(\Theta_X)$ is the space of harmonic 1-forms with values in $\Theta_X$.
\end{theo}

It was proven in \cite[Theorem~4.5]{rollenske2011kuranishi} that $\Phi_k(t)$ vanishes for $k>\nu$ if $\lg$ is $\nu$-step nilpotent, which guarantees the convergence of the formal power series $\Phi(t)$ for all parameters $t_1,\ldots,t_n$.
Therefore, the Kuranishi space of $X$ is described by polynomial equations of degree at most $\nu$.

Since the definition of $\Kur(X)$ depends on the choice of a hermitian metric, the described construction is not canonical. However, for different choices of hermitian structures the germs of the resulting complex spaces are (non-canonically) isomorphic. Furthermore, the complex manifold $X$ has unobstructed deformations if and only if the Kuranishi space of $X$ is smooth, that is, $\Kur(X)$ is a complex manifold.

\begin{exam}
	Possibly the most prominent example of a complex parallelisable nilmanifold which is not a complex torus is the so-called Iwasawa manifold $I$. It can be described as a quotient of the $3$-dimensional complex Heisenberg group and thus its associated Lie algebra is the freely nilpotent Lie algebra $\frn_{2,2}$. In \cite{nakamura75} Nakamura showed that $I$ has unobstructed deformations. Moreover, he showed that there are small deformations of $I$ which are no longer complex parallelisable.
\end{exam}

\begin{rem}
	It was proven in \cite[Corollary~5.2]{rollenske2011kuranishi} that the phenomenon of small deformations which are no longer complex parallelisable occurs for every complex parallelisable nilmanifold which is not a complex torus. By a result of \cite{wavrik69} this implies that the Kuranishi family of a complex parallelisable nilmanifold $X$ is never universal unless $X$ is a complex torus.
	
	By \cite[Theorem~2.4]{popovici2022deformations}, the isomorphism type of the complex Lie algebra $\lg$ does not change in a small complex parallelisable deformation of $X$. Therefore, one could treat these also by deforming the lattice $\Gamma$ inside a fixed complex Lie group $G$, as is usually done for complex tori.
\end{rem}

\subsection{Unobstructed complex parallelisable nilmanifolds}

In this section we prove our main theorem and as a consequence obtain a partial classification of complex parallelisable nilmanifolds with unobstructed deformations.

The key observation is the following:

\begin{lem}\label{lem:mce}
	A vector-valued $1$-form $\Phi\in\lgbar^*\otimes\lg$ satisfies the Maurer--Cartan equation \eqref{eq: MC}
	if and only if the linear map $\Phi\colon\lgbar\to\lg$ is a Lie algebra homomorphism.
\end{lem}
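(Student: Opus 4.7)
The plan is to unpack both sides on arbitrary arguments $\overline x, \overline y \in \lgbar$ and verify that the Maurer--Cartan equation and the homomorphism condition yield literally the same identity. Concretely, I would write $\Phi = \sum_i \overline\alpha_i \otimes x_i$ with $\overline\alpha_i \in \lgbar^*$ and $x_i \in \lg$, and compute each of $\delbar\Phi$ and $\tfrac12[\Phi,\Phi]$ evaluated at $(\overline x, \overline y)$.

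For the first term, $\delbar$ leaves the $\lg$-factor untouched and coincides with $d$ on $(0,1)$-forms, so the formula $d\overline\alpha(\overline x, \overline y) = -\overline\alpha([\overline x, \overline y])$ derived earlier in this section (using that every left-invariant $\overline \alpha \in \lgbar^*$ has $\delbar\overline\alpha = d\overline\alpha$) gives, after summing over $i$,
\[ (\delbar\Phi)(\overline x, \overline y) \;=\; -\Phi([\overline x, \overline y]). \]

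For the second term, the Schouten bracket formula \eqref{eq:schouten} applied termwise yields
\[ [\Phi, \Phi] \;=\; \sum_{i,j} (\overline\alpha_i \wedge \overline\alpha_j) \otimes [x_i, x_j]. \]
Evaluating on $(\overline x, \overline y)$ and using the antisymmetry of the wedge together with the antisymmetry of the Lie bracket produces a factor of~$2$, so that
\[ \tfrac12[\Phi, \Phi](\overline x, \overline y) \;=\; \sum_{i,j} \overline\alpha_i(\overline x)\,\overline\alpha_j(\overline y)\, [x_i, x_j] \;=\; [\Phi(\overline x),\,\Phi(\overline y)], \]
where the last identity just expands $\Phi(\overline z) = \sum_i \overline\alpha_i(\overline z)\, x_i$ and uses bilinearity of the bracket.

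Adding the two contributions, \eqref{eq: MC} evaluated on $(\overline x, \overline y)$ reduces to $\Phi([\overline x, \overline y]) = [\Phi(\overline x), \Phi(\overline y)]$, which is precisely the condition that the linear map $\Phi\colon\lgbar\to\lg$ be a Lie algebra homomorphism; since $\overline x, \overline y$ were arbitrary, both directions follow. The argument is pure bookkeeping and no genuine obstacle arises; the one point requiring attention is the combinatorial matching of signs and of the factor of~$2$ produced by the wedge, which is exactly absorbed by the $\tfrac12$ sitting in front of $[\Phi,\Phi]$ in the Maurer--Cartan equation.
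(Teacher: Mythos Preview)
Your proof is correct and follows essentially the same approach as the paper: both compute $\delbar\Phi(\overline x,\overline y)=-\Phi([\overline x,\overline y])$ via the Cartan formula and $\tfrac12[\Phi,\Phi](\overline x,\overline y)=[\Phi(\overline x),\Phi(\overline y)]$ via the Schouten bracket~\eqref{eq:schouten}, then read off the homomorphism condition. The only cosmetic difference is that the paper computes on a simple tensor $\Phi=\overline\alpha\otimes z$ and extends by linearity, whereas you carry the sum $\sum_i\overline\alpha_i\otimes x_i$ throughout.
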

\begin{proof}
	Evaluating the bracket \eqref{eq:schouten} on $\overline x,\overline y\in\lgbar$ we get
	\[ \tfrac12[\Phi,\Phi](\overline x,\overline y) = \tfrac12[\Phi(\overline x),\Phi(\overline y)]-\tfrac12[\Phi(\overline y),\Phi(\overline x)]=[\Phi(\overline x),\Phi(\overline y)]. \]
	Suppose $\Phi = \overline\alpha \otimes z$. Then we have
	\begin{align*}
		\delbar\Phi(\overline x, \overline y) &= (\delbar\overline\alpha \otimes z - \overline\alpha \wedge \delbar z)(\overline x,\overline y)\\
		&= d\overline\alpha(\overline x,\overline y) z\\
		&= -\overline\alpha([\overline x,\overline y]) z \\
		&= -\Phi([\overline x,\overline y]).
	\end{align*}
	The second equality comes from the fact that $[\lg,\lgbar] =0$, which implies both $\delbar z=0$ and $\partial \overline \alpha =0$.
	Therefore, the Maurer--Cartan equation states \[ \Phi([\overline x,\overline y])=[\Phi(\overline x),\Phi(\overline y)], \] which is the definition of a Lie algebra homomorphism.
\end{proof}

We are now ready to prove Theorem~\ref{thm: main deformation}.
\begin{thm}\label{thm:main-again}
Let $X$ be a compact complex parallelisable nilmanifold with associated nilpotent complex Lie algebra $\lg$. Then $X$ has unobstructed deformations if and only if $\lg\cong\lgbar$ and $\lg$ is pseudo-free.
\end{thm}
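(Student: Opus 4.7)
The plan is to reinterpret unobstructedness via Lemma~\ref{lem:mce}: $X$ is unobstructed if and only if there exists a polynomial family $\Phi(t)\colon\lgbar\to\lg$ of Lie algebra homomorphisms whose first-order part is $\sum t_i\mu_i$ for representatives $\mu_1,\dots,\mu_n$ of a basis of $H^1(X,\Theta_X)$. In the Green's operator construction~\eqref{eq: Green} the higher terms satisfy $\Phi_k(t)\in\lgbar^*\otimes[\lg,\lg]$ for all $k\ge 2$, so $\Phi(t)\equiv\sum t_i\mu_i\pmod{[\lg,\lg]}$ as linear maps $\lgbar\to\lg$.

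For the \emph{converse direction}, I would assume an isomorphism $\Psi\colon\lgbar\to\lg$ and that $\lg$ is pseudo-free. Given $\phi\in H^1(X,\Theta_X)$, transfer it via $\Psi$ to a linear map $\lambda\colon V\to\lg$, where $V\subset\lg$ is a generating subspace as in Lemma~\ref{lem: pseudo-free}. By pseudo-freeness each rescaled map $t\lambda$ extends uniquely to a Lie endomorphism $F_t\colon\lg\to\lg$, and $F_t$ depends polynomially on $t$ (of degree at most the nilpotency index $\nu$, since the $k$-th iterated bracket of $V$ contributes degree~$k$). Setting $\Phi(t):=F_t\circ\Psi$ then produces the desired polynomial family of Lie homomorphisms, which satisfies the Maurer--Cartan equation by Lemma~\ref{lem:mce}, thereby proving unobstructedness.

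For the \emph{forward direction}, assume $X$ is unobstructed. In Step~1, to show $\lg\cong\lgbar$, I would pick $\phi\in H^1(X,\Theta_X)$ whose image in $\operatorname{Hom}(\lgbar/[\lgbar,\lgbar],\lg/[\lg,\lg])$ is an isomorphism (possible since both abelianizations have the same complex dimension). Integrating $\phi$ by unobstructedness produces a Lie algebra homomorphism $\Phi\colon\lgbar\to\lg$ with $\Phi\equiv\phi\pmod{[\lg,\lg]}$; this is surjective on abelianizations, hence surjective by a standard induction along the lower central series using nilpotency, hence an isomorphism by dimension count. In Step~2, to show $\lg$ is pseudo-free, fix $\Psi$ from Step~1 and a section $V\subset\lg$ of $\lg\to\lg/[\lg,\lg]$, and identify $\operatorname{Hom}(V,\lg)\cong H^1(X,\Theta_X)$ via $\Psi$. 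Consider the holomorphic map $\rho\colon\operatorname{Hom}(V,\lg)\to\operatorname{Hom}(V,\lg)$, $\psi\mapsto(\Phi(\psi)\circ\Psi^{-1})|_V$, where $\Phi(\psi)$ denotes the Kuranishi integration of $\psi$. The congruence $\Phi(\psi)\equiv\psi\pmod{[\lg,\lg]}$ yields $\rho(\psi)=\psi+O(\psi^2)$, so $\rho$ is a local biholomorphism at $0$ whose image contains a Euclidean open neighborhood of $0$, and every element $\rho(\psi)$ there extends to the endomorphism $\Phi(\psi)\circ\Psi^{-1}$ of $\lg$. Writing $\lg=\lf/\lh$ with generators $v_1,\dots,v_m\in V$, the set $S:=\{\psi\in\operatorname{Hom}(V,\lg)\mid\psi\text{ extends to an endomorphism of }\lg\}$ is the closed algebraic subvariety of the irreducible affine space $\operatorname{Hom}(V,\lg)$ cut out by the equations $r(\psi(v_1),\dots,\psi(v_m))=0$ for $r\in\lh$. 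Since $S$ contains a Euclidean open, which is Zariski dense in $\operatorname{Hom}(V,\lg)$, we conclude $S=\operatorname{Hom}(V,\lg)$, and Lemma~\ref{lem: pseudo-free} gives pseudo-freeness.

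The hard part will be the pseudo-freeness step of the forward direction: the Kuranishi construction only produces endomorphisms extending a given $\psi$ modulo $[\lg,\lg]$ rather than on the nose. Bridging that gap requires the mixed argument above, combining the holomorphic inverse function theorem with the algebraic closedness of the extension condition and the Zariski density of Euclidean opens in $\IC^N$.
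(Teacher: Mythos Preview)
Your argument is correct, and the converse direction together with Step~1 of the forward direction mirror the paper's proof closely. The genuine divergence is in Step~2 (pseudo-freeness), where you work harder than necessary.

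The paper extracts a sharper consequence of the Green's operator recursion~\eqref{eq: Green} than you do. You observe that $\Phi_k(t)\in\lgbar^*\otimes[\lg,\lg]$ for $k\ge 2$ (controlling the \emph{target}), whence $\Phi\equiv\mu\pmod{[\lg,\lg]}$; this forces you into the inverse function theorem plus Zariski-density manoeuvre to upgrade ``extension modulo $[\lg,\lg]$'' to ``extension on the nose''. The paper instead notes that each $\Phi_k$ for $k\ge 2$ lies in $\operatorname{im}(\delbar^*)$, hence is orthogonal to the harmonic forms $\Ann([\lgbar,\lgbar])\otimes\lg=\overline V^*\otimes\lg$. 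This controls the \emph{source}: it says $\Phi_k|_{\overline V}=0$, so $\Phi|_{\overline V}=\mu$ exactly. Thus unobstructedness is directly equivalent to the statement that every linear map $\overline V\to\lg$ extends to a Lie algebra homomorphism $\lgbar\to\lg$. From here pseudo-freeness is immediate: given $\varphi\colon V\to\lg$, extend $\overline{x_i}\mapsto\varphi(x_i)$ to a homomorphism $\lgbar\to\lg$ and precompose with the isomorphism $\lg\to\lgbar$ from Step~1. No analytic or Zariski-density argument is needed.

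Your route is still valid and illustrates a useful general principle (a Zariski-closed condition holding on a Euclidean open holds everywhere), but the ``hard part'' you flag at the end is an artefact of looking at the wrong side of the tensor product $\lgbar^*\otimes\lg$.
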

\begin{proof}
The manifold $X$ has unobstructed deformations if and only if for all $\mu \in H^1(X,\Theta_X) = {\Ann([\lgbar,\lgbar])}\tensor \lg $ there exists a formal power series $\Phi$ as in \eqref{eq: series} satisfying the Maurer--Cartan equation and such that $\Phi_1 = \mu$. Given the existence of such a power series one can always construct a new solution $\Phi = \sum_{k\geq 1} \Phi_k \in \lgbar^* \otimes \lg$, as described in \eqref{eq: Green}, such that each $\Phi_k$ has no $\delbar$-closed summands for $k\geq 2$, i.\,e., no summands are contained in $ {\Ann([\lgbar,\lgbar])}\tensor \lg $.

After choosing a hermitian metric, we may identify the quotient $\lg/[\lg,\lg]$ with the subspace $V=\left<x_1,\ldots,x_m\right>\subset\lg$ spanned by an orthonormal basis $x_1,\ldots,x_m$. Then the $\Phi_1$-part of $\Phi\in\lgbar\otimes\lg$, regarded as a linear map $\lgbar\to\lg$, is simply given by its restriction to $\overline V=\left<\overline{x_1},\ldots,\overline{x_m}\right>$.

Therefore, if we regard $\mu$ as a linear map from $\overline V$ to $\lg$, the infinitesimal deformation $\mu$ is unobstructed if and only if there exists a linear map $\Phi\colon \lgbar \to \lg$ extending $\mu$ and satisfying the Maurer--Cartan equation.
By Lemma~\ref{lem:mce}, this is equivalent to $\Phi\colon\lgbar\to\lg$ being a Lie algebra homomorphism.
Hence, $X$ has unobstructed deformations if and only if every linear map $\overline V\to\lg$ extends to a Lie algebra homomorphism $\lgbar\to\lg$.

We claim that the latter property is satisfied if and only if $\lg\cong\lgbar$ and $\lg$ is pseudo-free. By Lemma~\ref{lem: pseudo-free}, we know that $\lg$ is pseudo-free if and only if every linear map $V\to\lg$ extends to a Lie algebra endomorphism $\lg\to\lg$.

First assume that $\lg\cong\lgbar$ and $\lg$ is pseudo-free.
By Lemma~\ref{lem:pseudo-free isomorphic}, there exists an isomorphism $g\colon\lgbar\to\lg$ such that $g(\overline{x_i})=x_i$.
Now let $\varphi\colon\overline V\to\lg$ be a given linear map.
We can extend the linear map $V\to\lg$ defined by $x_i\mapsto\varphi(\overline{x_i})$ to a Lie algebra endomorphism $\lg\to\lg$.
Precomposing this endomorphism with $g\colon\lgbar\to\lg$, we obtain a Lie algebra homomorphism $\lgbar\to\lg$ extending $\varphi$.

Conversely, suppose that every linear map $\overline V\to\lg$ extends to a Lie algebra homomorphism $\lgbar\to\lg$.
Applying this to the linear map given by $\overline{x_i}\mapsto x_i$, we obtain a Lie algebra homomorphism $h\colon\lgbar\to\lg$
such that $h(\overline{x_i})=x_i$. Since $[\overline x,\overline y]=\overline{[x,y]}$, the map $h'(x)=\overline{h(\overline x)}$
defines a ($\IC$-linear) Lie algebra homomorphism $\lg\to\lgbar$. Clearly, $h'$ is inverse to $h$ since $h'(x_i)=\overline{x_i}$.
This proves that $\lg\cong\lgbar$ as complex Lie algebras.
It remains to show that $\lg$ is pseudo-free. Let $\varphi\colon V\to\lg$ be a linear map.
We can extend the linear map $\overline V\to\lg$ defined by $\overline{x_i}\mapsto\varphi(x_i)$ to a Lie algebra homomorphism $\lgbar\to\lg$.
Precomposing this homomorphism with $h'\colon\lg\to\lgbar$, we obtain a Lie algebra endomorphism $\lg\to\lg$ extending $\varphi$.
\end{proof}

\begin{exam}
We want to illustrate the necessity of the reality condition in the above theorem.
Suppose that $X$ has the Lie algebra $\lg_\mu$ from Example~\ref{exam: smallest}.
If $\mu\notin\IP^1_\IR$, then $\lg_\mu$ is still pseudo-free, although $X$ does not have unobstructed deformations.
Concretely, the infinitesimal deformation $\Phi_1\colon\overline V\to\lg$ given by $\overline x_1\mapsto x_1$
and $\overline x_2\mapsto x_2$ does not extend to a homomorphism $\Phi\colon\overline\lg\to\lg$.
Indeed, if this would be the case, then the verbal ideal defining $\lg_\mu$ would need to contain
$\mu_1v_1+\mu_2v_2$ and $\overline{\mu_1}v_1+\overline{\mu_2}v_2$. However, since $\mu\notin\IP^1_\IR$, this would imply
that it contains both highest weight vectors $v_1$ and $v_2$. Hence, the irreducible representation $V_{(5,2)}$ would occur with multiplicity~$2$ in the ideal defining $\lg_\mu$,
which is a contradiction. Therefore, we have $\lg_\mu\not\cong\overline{\lg_\mu}$ by Lemma~\ref{lem:pseudo-free isomorphic}.

Choosing for example $\mu=(1:i)$ it is clear that the underlying real Lie algebra is defined over $\IQ$, so \cite{malcev51} guarantees the existence of a lattice in the associated Lie group. The quotient is a compact complex parallelisable nilmanifold with pseudo-free Lie algebra but obstructed deformations.
\end{exam}

\begin{cor}\label{cor:nilmanifolds20}
	Let $X$ be a complex parallelisable nilmanifold with unobstructed deformations which is not a complex torus. If $\dim X \leq 20$ then the associated Lie algebra is one of the Lie algebras in Table~\ref{tab: small dim}. All of the $19$ individual Lie algebras can occur and Lie algebras in the family $\{\lg_\mu\}$ can occur at least for $\mu \in \IP^1_{\IQ}$.
	
	Moreover, the family $\{\lg_\mu\}$ provides infinitely many complex homotopy types of complex parallelisable nilmanifolds with unobstructed deformations.
\end{cor}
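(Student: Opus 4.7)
The plan is to assemble the classification, realisation, and rigidity pieces already established in the paper. First I would apply Theorem~\ref{thm: main deformation} to translate unobstructedness of $X$ into the algebraic condition that the associated Lie algebra $\lg$ is pseudo-free and satisfies $\lg\cong\overline\lg$. Combined with Theorem~\ref{thm: pseudofree classification}~\ref{it:cx.dim19}, or equivalently Theorem~\ref{thm: class}, this forces $\lg$ to be one of the Lie algebras listed in Table~\ref{tab: small dim}, which proves the first assertion.

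Next I would exhibit realisations. Each of the $19$ individual Lie algebras, and each member $\lg_\mu$ with $\mu\in\IP^1_\IQ$ of the family in Example~\ref{exam: smallest}, is the quotient of a freely nilpotent Lie algebra by a verbal ideal whose generators can be taken to be highest weight vectors with rational coefficients, as indicated in Examples~\ref{exam: smallest} and~\ref{exam: 7-step, 2gen}. Hence $\lg$ admits a $\IC$-basis with rational structure constants; in particular the underlying real Lie algebra is defined over $\IQ$, and the antilinear map sending each such basis vector to itself provides an isomorphism $\lg\cong\overline\lg$. By Mal'cev's criterion~\cite{malcev51}, the simply connected complex Lie group $G$ with Lie algebra $\lg$ admits a lattice $\Gamma$, and the quotient $X=\Gamma\backslash G$ is a compact complex parallelisable nilmanifold whose associated Lie algebra is $\lg$ and which, by Theorem~\ref{thm: main deformation}, has unobstructed deformations.

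For the final assertion I would combine Lemma~\ref{lem:pseudo-free isomorphic} with the fact that the complex homotopy type of a complex parallelisable nilmanifold determines the associated complex Lie algebra. The former guarantees that the Lie algebras $\lg_\mu$ for distinct $\mu\in\IP^1_\IQ$ are pairwise non-isomorphic, yielding countably infinitely many non-isomorphic complex Lie algebras realised as above. For the latter, I would invoke that the Dolbeault model of $X=\Gamma\backslash G$ is quasi-isomorphic to the Chevalley--Eilenberg bicomplex of $\lg\oplus\overline\lg$ with vanishing cross-bracket, from which the bracket on $\lg$ is recovered dually from the bigraded cohomology; this is the complex-geometric refinement that also underlies the invariance of $\lg$ in complex parallelisable deformations proved in \cite{popovici2022deformations}.

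The main obstacle I anticipate is precisely this last point: cleanly spelling out why non-isomorphic Lie algebras $\lg_\mu$ give non-isomorphic complex homotopy types (as opposed to merely non-biholomorphic manifolds). Everything else reduces to concatenating Theorem~\ref{thm: main deformation}, Theorem~\ref{thm: class}, Mal'cev's criterion, and Lemma~\ref{lem:pseudo-free isomorphic}.
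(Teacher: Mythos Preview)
Your first two steps match the paper's proof almost exactly: the classification is obtained from Theorem~\ref{thm: main deformation} together with Theorem~\ref{thm: class}, and the realisation of all listed Lie algebras (including $\lg_\mu$ for $\mu\in\IP^1_\IQ$) follows because they are defined over~$\IQ$, so that the reality condition $\lg\cong\overline\lg$ holds and Mal'cev's criterion yields a lattice.

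The gap is in the last step, and it is exactly the one you anticipate. The phrase ``complex homotopy type'' does not refer to the Dolbeault model or to any biholomorphic invariant; it refers to the isomorphism type of the Sullivan $\IC$-minimal model of the de~Rham algebra, cf.\ \cite[Section~4]{latorre2018family}. This is an invariant of the underlying smooth manifold, not of the complex structure, so recovering the bracket on $\lg$ from bigraded Dolbeault cohomology (or invoking \cite{popovici2022deformations}) does not address the claim. What the paper does instead is use Nomizu's theorem \cite{nomizu54}: for a nilmanifold $\Gamma\backslash H$ with Lie algebra $\lh$, the $\IC$-minimal model is the Chevalley--Eilenberg complex $(\Wedge^\bullet\lh_\IC^*,d)$. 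Since the degree-one part of $d$ is dual to the Lie bracket on $\lh_\IC\cong\lg_\mu\oplus\overline{\lg_\mu}$, non-isomorphic complex Lie algebras $\lg_\mu$ (guaranteed by Lemma~\ref{lem:pseudo-free isomorphic}) give non-isomorphic $\lh_\IC$ and hence non-isomorphic $\IC$-minimal models. Replacing your Dolbeault argument with this Nomizu/minimal-model argument closes the gap.
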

\begin{proof}
	 Except for the Lie algebras in the family $\{\lg_\mu\}$ with $\mu\notin\IP^1_\IR$, all Lie algebras in Table~\ref{tab: small dim} satisfy the reality condition $\lg\cong\lgbar$.
     If we further assume $\mu \in \IP^1_{\IQ}$, then Mal'cev's criterion \cite{malcev51} proves the existence of a lattice in the associated complex Lie group for all Lie algebras in Table~\ref{tab: small dim}.
	 
	 As explained in \cite[Section~4]{latorre2018family} two manifolds have the same complex homotopy type if and only if their $\IC$-minimal models are isomorphic. In the case of a real nilmanifold $\Gamma\backslash H$ with Lie algebra $\lh$ Nomizu's theorem \cite{nomizu54} implies that the $\IC$-minimal model is given by the Chevalley complex $(\Wedge^{\bullet}\lh_\IC^*, d)$. Since the differential $\lh_\IC^* \to \Wedge^2 \lh_\IC^*$ encodes the Lie bracket by Cartan's formula, non-isomorphic Lie algebras have non-isomorphic minimal models.
	 	 This concludes the proof by Lemma~\ref{lem:pseudo-free isomorphic}. 
\end{proof}

Finally, we are in a position to  prove our classification result.

\begin{proof}[Proof of Corollary~\ref{cor: classification cx parall}]
The first two statements follow from Theorem~\ref{thm:main-again} and Theorem~\ref{thm: pseudofree classification},
combined with the observation that the occuring Lie algebras $\gothn_{m,\nu}$ and the ones listed in Table~\ref{tab: nilpotency index 5}
satisfy the reality condition $\lg\cong\lgbar$. Furthermore, these Lie algebras are geometrically realizable, as the underlying real Lie algebras are defined over $\IQ$, and hence Mal'cev's criterion \cite{malcev51} guarantees the existence of a lattice.
The last two statements in Corollary~\ref{cor: classification cx parall} were proven in Corollary~\ref{cor:nilmanifolds20}.
\end{proof}

\bibliographystyle{halpha}
\bibliography{references}

\begin{thebibliography}{LUV18}
\expandafter\ifx\csname url\endcsname\relax
  \def\url#1{\texttt{#1}}\fi
\expandafter\ifx\csname doi\endcsname\relax
  \def\doi#1{\burlalt{doi:#1}{http://dx.doi.org/#1}}\fi
\expandafter\ifx\csname urlprefix\endcsname\relax\def\urlprefix{URL }\fi
\expandafter\ifx\csname href\endcsname\relax
  \def\href#1#2{#2}\fi
\expandafter\ifx\csname burlalt\endcsname\relax
  \def\burlalt#1#2{\href{#2}{#1}}\fi

\bibitem[Bah21]{bahturin2021identical}
Yuri Bahturin.
\newblock {\em Identical relations in {Lie} algebras}, volume~68 of {\em
  Expositions in Mathematics}.
\newblock De Gruyter, 2021.
\newblock \doi{10.1515/9783110566659}.

\bibitem[FP23]{fino-paradiso2023}
Anna Fino and Fabio Paradiso.
\newblock Hermitian structures on six-dimensional almost nilpotent
  solvmanifolds, 2023, \burlalt{2306.03485}{http://arxiv.org/abs/2306.03485}.
\newblock \urlprefix\url{https://arxiv.org/abs/2306.03485}.

\bibitem[Ful96]{Fulton_1996}
William Fulton.
\newblock {\em Young Tableaux: With Applications to Representation Theory and
  Geometry}.
\newblock London Mathematical Society Student Texts. Cambridge University
  Press, 1996.

\bibitem[Kly74]{klyachko1974lie}
Aleksandr~Anatol Klyachko.
\newblock Lie elements in the tensor algebra.
\newblock {\em Siberian Math. J.}, 15(6):914--920, 1974.

\bibitem[KS58]{Kod-sp58}
K.~Kodaira and D.~C. Spencer.
\newblock On deformations of complex analytic structures. {I}, {II}.
\newblock {\em Ann. of Math. (2)}, 67:328--466, 1958.

\bibitem[Kur62]{kuranishi62}
M.~Kuranishi.
\newblock On the locally complete families of complex analytic structures.
\newblock {\em Ann. of Math. (2)}, 75:536--577, 1962.

\bibitem[LUV18]{latorre2018family}
Adela Latorre, Luis Ugarte, and Raquel Villacampa.
\newblock A family of complex nilmanifolds with in finitely many real homotopy
  types.
\newblock {\em Complex Manifolds}, 5(1):89--102, 2018.

\bibitem[Mal51]{malcev51}
A.~I. Malcev.
\newblock On a class of homogeneous spaces.
\newblock {\em Amer. Math. Soc. Translation}, 1951(39):33, 1951.

\bibitem[Man22]{manetti2022lie}
Marco Manetti.
\newblock {\em Lie Methods in Deformation Theory}.
\newblock Monographs in Mathematics. Springer, 2022.

\bibitem[Nak75]{nakamura75}
Iku Nakamura.
\newblock Complex parallelisable manifolds and their small deformations.
\newblock {\em J. Differential Geometry}, 10:85--112, 1975.

\bibitem[Nom54]{nomizu54}
Katsumi Nomizu.
\newblock On the cohomology of compact homogeneous spaces of nilpotent {L}ie
  groups.
\newblock {\em Ann. of Math. (2)}, 59:531--538, 1954.

\bibitem[PSU22]{popovici2022deformations}
Dan Popovici, Jonas Stelzig, and Luis Ugarte.
\newblock Deformations of higher-page analogues of $\del\bar\del$-manifolds.
\newblock {\em Math. Z.}, pages 1--25, 2022.

\bibitem[Rol09]{rollenske09}
S\"onke Rollenske.
\newblock Lie-algebra {D}olbeault-cohomology and small deformations of
  nilmanifolds.
\newblock {\em J. Lond. Math. Soc. (2)}, 79(2):346--362, 2009.
\newblock \doi{10.1112/jlms/jdn076}.

\bibitem[Rol11]{rollenske2011kuranishi}
S{\"o}nke Rollenske.
\newblock The {Kuranishi} space of complex parallelisable nilmanifolds.
\newblock {\em J. Eur. Math. Soc. (JEMS)}, 13(3):513--531, 2011.

\bibitem[Sch03]{schocker2003embeddings}
Manfred Schocker.
\newblock Embeddings of higher {L}ie modules.
\newblock {\em J. Pure Appl. Algebra}, 185(1-3):279--288, 2003.

\bibitem[SF99]{Stanley_Fomin_1999}
Richard~P. Stanley and Sergey Fomin.
\newblock {\em Enumerative Combinatorics}.
\newblock Cambridge Studies in Advanced Mathematics. Cambridge University
  Press, 1999.

\bibitem[Thr42]{thrall1942symmetrized}
Robert~M. Thrall.
\newblock On symmetrized {K}ronecker powers and the structure of the free {L}ie
  ring.
\newblock {\em Amer. J. Math.}, 64(1):371--388, 1942.

\bibitem[Vak06]{Vakil}
Ravi Vakil.
\newblock Murphy’s law in algebraic geometry: Badly-behaved deformation
  spaces.
\newblock {\em Invent. Math.}, 164:569--590, 2006.

\bibitem[Wav69]{wavrik69}
John~J. Wavrik.
\newblock Obstructions to the existence of a space of moduli.
\newblock In {\em Global Analysis (Papers in Honor of K. Kodaira)}, pages
  403--414. Univ. Tokyo Press, Tokyo, 1969.

\bibitem[Zhu97]{zhuravlev1997}
V.~M. Zhuravlev.
\newblock Remarks on verbal ideals of a free finitely-generated {L}ie algebra.
\newblock {\em Fundam. Prikl. Mat.}, 3(2):435--451, 1997.

\end{thebibliography}

\end{document}